\newtheorem{thm}{Theorem}
\newtheorem{lem}{Lemma}
\theoremstyle{definition}
\newtheorem{defn}{Definition}
\newtheorem{rem}{Remark}
\newtheorem{prob}{Problem}
\renewcommand{\Re}{\mathbb R}
\newcommand{\BB}{\mathbf B}
\renewcommand{\S}{\mathbb{S}}
\newcommand{\HH}{\mathbb{H}}
\newcommand{\V}{\mathcal{V}}
\def\bea{\begin{eqnarray}}
\def\eea{\end{eqnarray}}
\DeclareMathOperator{\grad}{grad}
\DeclareMathOperator{\inter}{int}
\DeclareMathOperator{\bd}{bd}
\DeclareMathOperator{\conv}{conv}
\DeclareMathOperator{\arsh}{arsinh}
\DeclareMathOperator{\rank}{rank}
\DeclareMathOperator{\sh}{sh}
\DeclareMathOperator{\cc}{cc}
\DeclareMathOperator{\IC}{IC}
\DeclareMathOperator{\cm}{cm}
\DeclareMathOperator{\ccm}{ccm}
\DeclareMathOperator{\skel}{skel}
\DeclareMathOperator{\csch}{csch}
\begin{document}

\title[Centering Koebe polyhedra]{Centering Koebe polyhedra via M\"obius transformations}
\author[Z. L\'angi] {Zsolt L\'angi}
\address{Zsolt L\'angi, MTA-BME Morphodynamics Research Group and Dept. of Geometry, Budapest University of Technology,
Egry J\'ozsef utca 1., Budapest, Hungary, 1111}
\email{zlangi @math.bme.hu}
\thanks{Partially supported by the NKFIH Hungarian Research Fund grant 119245, by the grant BME FIKP-V\'IZ and the UNKP-18-4 New National Excellence Program of EMMI and by the Bolyai Research Scholarship of the Hungarian Academy of Sciences.}
\subjclass[2010]{52B10,52C26}

\keywords{Circle Packing Theorem, Koebe polyhedron, centering, M\"obius transformation, hyperbolic isometry, integral curves}

\begin{abstract}
A variant of the Circle Packing Theorem states that the combinatorial class of any convex polyhedron contains elements midscribed to the unit sphere centered at the origin, and that these representatives are unique up to M\"obius transformations of the sphere. Motivated by this result, various papers investigate the problem of centering spherical configurations under M\"obius transformations. In particular, Springborn proved that for any discrete point set on the sphere there is a M\"obius transformation that maps it into a set whose barycenter is the origin, which implies that the combinatorial class of any convex polyhedron contains an element midsribed to a sphere with the additional property that the barycenter of the points of tangency is the center of the sphere. This result was strengthened by Baden, Krane and Kazhdan who showed that the same idea works for any reasonably nice measure defined on the sphere. The aim of the paper is to show that Springborn's statement remains true if we replace the barycenter of the tangency points by many other polyhedron centers. The proof is based on the investigation of the topological properties of the integral curves of certain vector fields defined in hyperbolic space. We also show that most centers of Koebe polyhedra cannot be obtained as the center of a suitable measure defined on the sphere.
\end{abstract}
\maketitle

\section{Introduction}\label{sec:intro}

The famous Circle Packing Theorem \cite{PachAgarwal} states that every simple, connected plane graph can be realized as the intersection graph of a circle packing in the Euclidean plane, or equivalently, on the sphere; that is, by a graph whose vertices are the centers of some mutually nonoverlapping circles, and two vertices are connected if the corresponding circles are tangent.

This theorem was first proved by Koebe \cite{Koebe}, and was later rediscovered by Thurston \cite{Thurston}, who noted that this result also follows from the work of Andreev \cite{Andreev1, Andreev2}.
The theorem has induced a significant interest in circle packings in many different settings, and has been generalized in many directions.
One of the most known variants is due to Brightwell and Scheinerman \cite{Brightwell}. By this result, any polyhedral graph (i.e. any simple, $3$-connected planar graph \cite{Steinitz1, Steinitz2}), together with its dual graph, can be realized simultaneously as intersection graphs of two circle packings with the property that each point of tangency belongs to two pairs of tangent circles which are orthogonal to each other.
Such a pair of families of circles on the unit sphere $\S^2$ centered at the origin $o$ generate a convex polyhedron \emph{midscribed} to the sphere; that is, having all edges tangent to it. In this polyhedron, members of one family, called \emph{face circles}, are the incircles of the faces of the polyhedron, and members of the other family, called \emph{vertex circles}, are circles passing through all edges starting at a given vertex.
This yields the following theorem \cite{Brightwell, Schramm}.

\begin{thm}\label{thm:Koebe}
The combinatorial class of every convex polyhedron has a representative midscribed to the unit sphere $\S^2$.
\end{thm}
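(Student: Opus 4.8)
The plan is to derive the theorem from the Brightwell--Scheinerman primal--dual circle packing theorem recalled above; that result supplies all the combinatorics, and everything that remains is elementary projective geometry of the sphere. Start with a convex polyhedron $P$ in $\Re^3$ and let $G$ be its edge graph, which is a polyhedral (i.e.\ $3$-connected planar) graph by Steinitz's theorem~\cite{Steinitz1,Steinitz2}. Applying the Brightwell--Scheinerman theorem to $G$ and its dual $G^{*}$ yields two families of circles on $\S^2$: \emph{vertex circles} $C_v$ $(v\in V(G))$, forming a packing whose intersection graph is $G$, and \emph{face circles} $D_f$ $(f\in F(G))$, forming a packing whose intersection graph is $G^{*}$, such that for every edge $e=vw$ of $G$ bounded by faces $f,g$ the four circles $C_v,C_w,D_f,D_g$ pass through a single point $t_e$, with $C_v,C_w$ mutually tangent there, $D_f,D_g$ mutually tangent there, and each of $C_v,C_w$ orthogonal to each of $D_f,D_g$ at $t_e$. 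After replacing the configuration by its image under a suitable M\"obius transformation of $\S^2$, we may assume that no $C_v$ and no $D_f$ is a great circle, equivalently, that none of the planes $H_v$ containing $C_v$, nor any plane $H_f$ containing $D_f$, passes through $o$ (for each fixed circle, being mapped to a great circle is a positive-codimension condition on the M\"obius group, and there are only finitely many circles). For each vertex $v$ let $p_v$ be the apex of the right circular cone tangent to $\S^2$ exactly along $C_v$ (the pole of $H_v$ with respect to $\S^2$), and for each face $f$ let $p_f$ be the apex of the cone tangent along $D_f$. Set $Q:=\conv\{p_v:v\in V(G)\}$; I claim $Q$ is a convex polytope, midscribed to $\S^2$, and combinatorially equivalent to $P$.

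The heart of the argument is a short computation for each edge $e=vw$. The tangent cone with apex $p_v$ meets $\S^2$ only along $C_v$, so the line $p_vt_e$ is tangent to $\S^2$ at $t_e$ and thus lies in the tangent plane $T_{t_e}\S^2$; moreover, since this cone is right circular with axis $op_v$ (to which $H_v$, hence $C_v$, is perpendicular, so that $C_v$ is a latitude circle of the cone), the generator $p_vt_e$ is perpendicular, inside $T_{t_e}\S^2$, to the tangent line of $C_v$ at $t_e$. The same holds for $w$, and the tangency of $C_v$ and $C_w$ at $t_e$ means these two tangent lines coincide; hence $p_vt_e$ and $p_wt_e$ are the same line of $T_{t_e}\S^2$, so $p_v,t_e,p_w$ are collinear. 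Because the caps bounded by $C_v$ and by $C_w$ on the sides of $p_v$ and of $p_w$ lie on opposite sides of the common tangent, $p_v$ and $p_w$ lie on opposite sides of $t_e$ along this line, so $t_e$ lies in the open segment $(p_v,p_w)$; and since a tangent line meets a sphere only at its point of tangency, $[p_v,p_w]\cap\S^2=\{t_e\}$.

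Now fix a face $f$ with boundary cycle $v_1,\dots,v_k$ and boundary edges $e_1,\dots,e_k$. By the classical fact that two circles on $\S^2$ are orthogonal precisely when the plane of either contains the pole of the plane of the other, the orthogonality $C_{v_i}\perp D_f$ gives $p_{v_i}\in H_f$ for every $i$; thus $p_{v_1},\dots,p_{v_k}$ are coplanar, lying in $H_f$, and by the previous paragraph the polygon $R_f$ they span is circumscribed about the circle $D_f=H_f\cap\S^2$, touching it at $t_{e_1},\dots,t_{e_k}$. To assemble the polytope, note that each $H_f$ avoids $o$ and separates $o$ from $p_f$, so $o$ lies in the interior of the closed half-space $H_f^{-}$ bounded by $H_f$ and containing $o$. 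A standard verification — conveniently organized by Euler's formula once all the above incidences are recorded — shows that $Q=\bigcap_{f\in F(G)}H_f^{-}$ is a bounded convex polytope with $o$ in its interior, that no two of the points $p_v$ coincide and each of them is a vertex of $Q$, and that the facets of $Q$ are exactly the $R_f$, its edges exactly the segments $[p_v,p_w]$ with $vw\in E(G)$, and its vertices exactly the $p_v$. Hence the face lattice of $Q$ is that of $G$, so $Q$ is combinatorially equivalent to $P$; and by the edge computation above every edge of $Q$ is tangent to $\S^2$, so $Q$ is midscribed. This proves the theorem.

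The only genuinely deep input is the existence of the orthogonal primal--dual packing, that is, the Brightwell--Scheinerman (equivalently Schramm) theorem invoked at the start; granting it, the rest is routine spherical and projective geometry, and the ``hard part'' of the write-up is merely the face-lattice bookkeeping of the last paragraph. Were one to insist on a self-contained proof, that packing would have to be constructed from scratch, either by a topological continuity/degree argument that deforms a known packing while controlling the combinatorics, or --- more efficiently --- by a variational principle over the finite-dimensional space of hyperbolic (equivalently, de Sitter) data compatible with $G$; that is where essentially all of the difficulty would then lie, with the M\"obius normalization being the only other, and strictly routine, technical point.
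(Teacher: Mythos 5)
Your derivation follows exactly the route the paper itself takes: Theorem~\ref{thm:Koebe} is stated there as a known consequence of the Brightwell--Scheinerman primal--dual orthogonal circle packing theorem, with the midscribed representative obtained precisely by the polar construction you describe (poles of vertex circles as vertices, planes of face circles as facet planes), so the paper gives no independent proof beyond this citation and your write-up is a correct, more detailed version of the same argument. The one small caveat is that your normalization (``no circle is a great circle'') is weaker than what is needed to guarantee that $Q=\bigcap_f H_f^{-}$ is bounded with all poles on the correct side; the standard fix, which the paper uses implicitly via Springborn's centering result, is to choose the M\"obius transformation so that every cap has spherical radius strictly less than $\frac{\pi}{2}$.
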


Such representatives of combinatorial classes are called Koebe polyhedra. By Mostow's rigidity theorem \cite{Mostow, Gromov}, these representations are unique up to
M\"obius transformations of the sphere.
We note that by a famous result of Steinitz \cite{Steinitz3}, not all combinatorial classes can be represented by polyhedra circumscribed about (or inscribed in) a sphere; in his seminal paper Rivin \cite{Rivin} gave a characterization of the possible classes.

In \cite{Mani}, Mani strengthened this result by showing that up to Euclidean isometries, every combinatorial class can be uniquely represented by a polyhedron midscribed to $\S^2$ such that the barycenter of the tangency points is the origin (cf. also \cite[p.118]{Ziegler} and \cite[p.296a]{Grunbaum}).
Springborn \cite{Springborn} gave an elegant different proof of the same statement, based on the application of the following theorem.

\begin{thm}[Springborn]\label{thm:Springborn}
For any mutually distinct points $v_1,v_2,\ldots, v_n$ on the $d$-dimensional unit sphere $\S^d$ centered at the origin $o$, where $n \geq 3$ and $d \geq 2$, there is a M\"obius transformation $T$ of $\S^d$ such that $\sum_{i=1}^n T(v_i) = o$. Furthermore, if $\tilde{T}$ is another such M\"obius transformation, then $\tilde{T} = RT$, where $R$ is an isometry of $\S^d$.
\end{thm}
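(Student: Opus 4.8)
\emph{Proof plan.} The plan is to transfer the problem to hyperbolic geometry and realize the centering condition as the vanishing of the gradient of a strictly convex exhaustion function -- this is, in effect, the Douady--Earle conformal barycenter construction applied to the uniform atomic measure on $\{v_1,\dots,v_n\}$. Identify the group of M\"obius transformations of $\S^d$ with the isometry group of hyperbolic space $\HH^{d+1}$, modeled as the open unit ball $B^{d+1}\subset\Re^{d+1}$ with the Poincar\'e metric, so that $\S^d$ is its ideal boundary. For $x\in B^{d+1}$ let $\phi_x$ be the unique M\"obius transformation whose extension to $\HH^{d+1}$ is the hyperbolic translation carrying $x$ to the center $o$; on $\S^d$ it is given by $\phi_x(v)=\frac{(1-|x|^2)(v-x)}{|v-x|^2}-x$. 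Every M\"obius transformation $T$ of $\S^d$ factors uniquely as $T=R\circ\phi_x$ with $x=T^{-1}(o)$ and $R$ an isometry of $\S^d$ (equivalently $R\in O(d+1)$). Since $R$ is linear and fixes $o$, we have $\sum_{i=1}^n T(v_i)=R\bigl(n\,W(x)\bigr)$ for the vector field $W(x):=\frac1n\sum_{i=1}^n\phi_x(v_i)$, hence $\sum_i T(v_i)=o$ if and only if $W(x)=o$. Thus the whole theorem reduces to the single claim that \emph{$W$ has exactly one zero $x^\ast$ in $B^{d+1}$}: then $\phi_{x^\ast}$ is a centering transformation, and because the centering transformations are precisely $\{R\circ\phi_{x^\ast}:R\in O(d+1)\}$, any two of them differ by post-composition with an isometry of $\S^d$.

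Next I would produce a potential for $W$. A short computation starting from the displayed formula for $\phi_x$ shows that $W=-\frac{1-|x|^2}{2}\,\nabla E$, where
\[ E(x)\;:=\;\frac1n\sum_{i=1}^n\log\frac{|x-v_i|^2}{1-|x|^2}. \]
The key observation is that the $i$-th summand $b_i(x):=\log\frac{|x-v_i|^2}{1-|x|^2}$ is exactly the Busemann function of $\HH^{d+1}$ associated with the ideal point $v_i$, normalized so that $b_i(o)=0$; thus $E$ is the average of $n$ Busemann functions, and since $\nabla E$ is a positive multiple of the hyperbolic gradient $\grad_{\HH}E$, the zeros of $W$ coincide with the critical points of $E$ regarded as a smooth function on $\HH^{d+1}$.

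Existence and uniqueness of $x^\ast$ now follow from two properties of $E$. First, $E$ is an exhaustion function: $E(x)\to+\infty$ as $x\to\S^d$. Near a boundary point distinct from all $v_i$ every term $b_i$ tends to $+\infty$; near $v_j$ the $n-1$ terms with $i\neq j$ grow like $-\log(1-|x|)$ while the remaining term satisfies $b_j(x)\ge\log(1-|x|)-\log 2$, so $E(x)\ge\frac{n-2}{n}\log\frac1{1-|x|}+O(1)\to+\infty$; this is where $n\ge 3$ is essential. Consequently $E$ attains a global minimum, which is a critical point, and $W$ has a zero. Second, $E$ is strictly convex along every hyperbolic geodesic $\gamma$: for unit-speed $\gamma$ one has $\frac{d^2}{dt^2}b_i(\gamma(t))=\operatorname{Hess}b_i(\dot\gamma,\dot\gamma)=1-\langle\dot\gamma,\grad b_i\rangle^2\ge 0$, with equality on an interval only when $\gamma$ is asymptotic to $v_i$; since a geodesic has only two ideal endpoints, for $n\ge 3$ at least one $b_i$ is strictly convex along $\gamma$, hence so is $E\circ\gamma$. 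A strictly geodesically convex function on $\HH^{d+1}$ has at most one critical point, so $x^\ast$ is unique. Unwinding the reduction of the first paragraph then delivers the M\"obius transformation $T$ and its uniqueness up to an isometry of $\S^d$.

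The step I expect to be the main obstacle is the analysis of $E$ near the points $v_j$: one must verify that the logarithmic blow-up contributed by the $n-1$ ``good'' Busemann terms strictly dominates the logarithmic decay of the single ``bad'' term $b_j$, which is precisely the place where $n\ge 3$ is used and which breaks down for $n=2$. A companion point requiring care is the convexity bookkeeping, which rests on the standard but non-trivial fact that a Busemann function of $\HH^{d+1}$ is strictly convex along every geodesic except those asymptotic to its defining ideal point, so that at least $n-2\ge 1$ strictly convex summands survive along every geodesic. The remaining ingredients -- the explicit formula for $\phi_x$, the identity $W=-\frac{1-|x|^2}{2}\nabla E$, the identification of $b_i$ with a Busemann function, and the Hessian identity $\operatorname{Hess}b_i=g-db_i\otimes db_i$ in curvature $-1$ -- are standard or routine.
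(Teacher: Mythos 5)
Your proposal is correct, and it is essentially the argument of the cited source: the paper states this theorem without proof, quoting it from Springborn's article, and your reduction to the unique critical point of the proper, geodesically convex potential $E=\frac1n\sum_i b_{v_i}$ (a normalized sum of Busemann functions on $\HH^{d+1}$) is precisely Springborn's proof, down to the identity $W=-\frac{1-|x|^2}{2}\nabla E$ and the use of $n\ge 3$ to control the single negative logarithmic term near each $v_j$. This is also the template the paper itself generalizes (its vector fields on $D\subset\HH^3$ and the gradient-field observation in the proof of Theorem~\ref{thm:circles}), so no further comparison is needed.
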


Baden, Krane and Kazhdan examined this problem in a more general form \cite{BCK}, and showed that the idea of the proof of Theorem~\ref{thm:Springborn} in \cite{Springborn} can be extended to the center of mass of any sufficiently well-behaved density function on $\S^d$.
Similar problems are investigated in \cite{BernEppstein}, where the authors considered the algorithmic aspects of optimization of circle families on $\S^2$ via
M\"obius transformations. We note that all these results investigate the problem of centering certain configurations on the sphere $\S^d$ (in particular, $\S^2$) via M\"obius transformations, having applications e.g. in computer graphics \cite{BernEppstein, BCK}.

The aim of this paper is to approach this question from geometric point of view, and to examine the problem of centering \emph{Koebe polyhedra} via M\"obius transformations, using various notions of `centers' of polyhedra from the literature. Whereas this problem seems similar to the one investigated in the papers mentioned above, it is worth noting that it cannot be reduced to the investigation of suitable density functions defined on $\S^2$: in Remark~\ref{rem:notdensity} we show that for most notions of centers examined in this paper, every combinatorial class contains a Koebe polyhedron whose center is outside the unit ball $\BB^3$.
In the first part of the paper we show that, apart from uniqueness, Springborn's statement can be generalized for most notions of polyhedron centers appearing in the literature. In addition, we prove a variant of Theorem~\ref{thm:Springborn} for families of circles.

We remark that the variant of Theorem~\ref{thm:main} with respect to the center of mass of the polyhedron, which we state as Problem~\ref{prob:balancing}, proves that every combinatorial class has a representative whose every face, vertex and edge contains a static equilibrium point \cite{balancing}. An affirmative answer to the problem, with many applications in mechanics \cite{balancing, MorseSmale, Holmes}, would be a discrete version of Theorem 1 in \cite{MorseSmale}, stating that for every $3$-colored quadrangulation $Q$ of $\S^2$ there is a convex body $K$ whose Morse-Smale graph, with respect to its center of mass, is isomorphic to $Q$.
These papers also describe possible applications of our problem in various fields of science, from physics to chemistry to manufacturing.

To state our main results, for any convex polyhedron $P \subset \Re^3$, by $\cc(P)$, $\IC(P)$ and for $k=0,1,2,3$ by $\cm_k(P)$ we denote the center of the (unique) smallest ball containing $P$, the set of the centers of the largest balls contained in $P$, and the center of mass of the $k$-dimensional skeleton of $P$, respectively.
Furthermore, if $P$ is simplicial, by $\ccm(P)$ we denote the circumcenter of mass of $P$ (see, e.g. \cite{Tabachnikov}, or Definition~\ref{defn:ccm} in Section~\ref{sec:prelim}).

Our main theorems are the following, where, with a little abuse of notation, if $P$ is a Koebe polyhedron and $T$ is a M\"obius transformation, by $T(P)$ we mean
the polyhedron defined by the images of the face circles and the vertex circles of $P$ under $T$.

\begin{thm}\label{thm:main}
Let $P$ be a Koebe polyhedron, and let $g(\cdot) \in \{ \cc(\cdot), \cm_0(\cdot), \cm_1(\cdot), \cm_2(\cdot) \}$. Then there is some M\"obius transformation $T_g$ such that $g(T_g(P))= o$. Furthermore, there is a M\"obius transformation $T_{\mathrm{ic}}$ with $o \in \IC(T_{\mathrm{ic}}(P))$, and if $P$ is simplicial, then for every $\lambda \in [0,1)$, there is a  M\"obius transformation $T_{\lambda}$ satisfying $\lambda \cm_3(T_{\lambda}(P))+ (1-\lambda)\ccm(T_{\lambda}(P))=o$.
\end{thm}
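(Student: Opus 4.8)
The plan is to follow the topological strategy that underlies Springborn's argument, recast in hyperbolic space. Identify the open unit ball $\BB^3$ with the Klein model of hyperbolic $3$-space $\HH^3$, so that M\"obius transformations of $\S^2$ correspond to orientation-preserving isometries of $\HH^3$, and a Koebe polyhedron $P$ corresponds to a fixed ideal polyhedron together with a choice of basepoint, namely the origin $o$. For $g(\cdot)$ one of the listed center functions, define a map $\Phi_g \colon \HH^3 \to \HH^3$ (or rather to $\BB^3$) by $\Phi_g(x) = g\bigl(T_x(P)\bigr)$, where $T_x$ is the isometry of $\HH^3$ carrying $o$ to $x$ along the geodesic, normalized so that the remaining rotational freedom is irrelevant (all the centers in the list are equivariant under rotations fixing $x$). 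The goal is to show $o \in \operatorname{im}\Phi_g$, i.e. that $o$ is a value; equivalently, thinking of $-\Phi_g$ as a tangent vector field on the Klein ball, that this field has a zero. For the combined functional $\lambda\cm_3 + (1-\lambda)\ccm$ the same setup applies once one checks, in Section~\ref{sec:prelim}, that $\ccm$ is well-defined for simplicial Koebe polyhedra and transforms reasonably; for $\IC$ one instead seeks a point $x$ with $o\in\IC(T_x(P))$, which is a closed-graph / set-valued version of the same fixed-point question.

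The key steps, in order, are: (1) record the precise behavior of each center $g$ under the hyperbolic isometry group — in particular that $g(T(P))$ depends continuously on $T$ and that $g(P)\in\operatorname{int}P\subset\BB^3$ always; (2) analyze the boundary behavior: as $x\to\xi\in\S^2$, show the polyhedron $T_x(P)$ degenerates so that every one of these centers is dragged toward $\xi$, i.e. $\Phi_g(x)\to\xi$ as $|x|\to 1$ (this is where one uses that $\cc,\cm_k$ all lie inside the convex hull of the tangency points, and that under a M\"obius transformation concentrating mass near $\xi$ the hull collapses to $\xi$); (3) conclude from (2) that the self-map $\Phi_g$ of $\overline{\BB^3}$ extends continuously to the boundary as the identity, so by a degree/Brouwer argument (or by the "no retraction" principle applied to $x\mapsto$ the point where the ray from $\Phi_g(x)$ through $x$ meets $\S^2$) the map $\Phi_g$ is surjective onto the open ball, hence hits $o$; (4) for $\IC$, replace the Brouwer step with a selection or a Kakutani-type fixed-point argument, using upper semicontinuity of $\IC(\cdot)$ and the fact that $\bigcup_{x}\IC(T_x(P))$ again sweeps out the whole open ball by the same boundary analysis; (5) for the affine combination with $\ccm$, note $\ccm(P)$ need not lie in $\operatorname{int}P$, so one must check separately that the convex combination with $\lambda<1$ still satisfies the boundary-limit property (2) — here the weight $(1-\lambda)>0$ on the possibly-unruly $\ccm$ term is controlled because $\cm_3$ dominates near the boundary and $\lambda$ is bounded away from $1$ only pointwise, so one argues for each fixed $\lambda$.

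Rather than a bare degree argument, the cleaner route — and the one the paper's abstract signals — is to phrase the proof via the \emph{integral curves} of the vector field $V_g(x) = -\Phi_g(x)$ pulled back to $\HH^3$ (using the logarithm map at $x$), show these curves cannot escape to the ideal boundary because $V_g$ points "inward" in the sense of step (2), deduce every integral curve stays in a compact set and hence accumulates at a zero of $V_g$ by a Lyapunov/Poincar\'e–Bendixson-type argument in dimension $3$ (some monotone quantity, e.g. the hyperbolic distance from a fixed point, or an energy functional as in Springborn's convexity argument, must be exhibited), and that zero is the desired $T_g$. I would try to find, for each $g$, a convex "objective function" $F_g$ on $\HH^3$ whose gradient flow is (proportional to) $V_g$ — Springborn's proof does exactly this for the barycenter, with $F$ essentially a sum of Busemann-type functions — and then the center being $o$ becomes a critical point of $F_g$; coercivity of $F_g$ (from step (2)) then gives existence of a minimum.

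The main obstacle I expect is step (2), the boundary degeneration analysis, carried out uniformly across the five different notions of center, together with establishing that the relevant flow/functional has the right coercivity. For $\cm_0,\cm_1,\cm_2$ and $\cc$ this should be routine since each lies in the convex hull of the edge-tangency points and that hull collapses; the genuinely delicate cases are $\IC$ (set-valued, and the incenter can jump) and especially $\ccm$, which is only defined via a somewhat algebraic formula (Definition~\ref{defn:ccm}), is not monotone under truncation, and can lie well outside the polyhedron — so controlling $\lambda\cm_3+(1-\lambda)\ccm$ near $\S^2$, and showing it still behaves like the identity on the boundary sphere, will require the most care. A secondary difficulty is that, unlike Springborn's barycenter map, these centers may fail to be the gradient of an explicit convex function, so the "integral curves" approach may have to substitute a purely topological (degree-theoretic) argument for the variational one in some cases, and one must then verify the map is at least continuous (and, for $\IC$, that its graph is closed) — the paper evidently handles this, so I would lean on whatever regularity of these centers is collected in Section~\ref{sec:prelim}.
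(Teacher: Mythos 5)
There is a genuine gap, and it sits exactly where you flagged the least concern: the boundary analysis of step (2). Your plan treats $\Phi_g(x)=g(T_x(P))$ as a self-map of $\overline{\BB^3}$ that extends continuously to $\S^2$ as the identity, so that a Brouwer/degree argument applies. This fails for polyhedron centers. First, the premise ``$g(P)\in\inter P\subset\BB^3$'' is false: a Koebe polyhedron is midscribed, so its vertices lie outside $\BB^3$, and the paper's Remark~\ref{rem:notdensity} shows that for \emph{every} center in your list there is a M\"obius image of $P$ whose center lies outside $\BB^3$. More importantly, the natural domain of the vector field is not all of $\HH^3$ but the region $D=\HH^3\setminus\bigl(\bigcup_i \bar{V}_i\cup\bigcup_j\bar{F}_j\bigr)$ cut out by the hyperbolic planes associated to the vertex and face circles: as the basepoint $x$ approaches the plane $V_i$, the $i$th vertex circle opens up to radius $\pi/2$, the $i$th vertex of $T_x(P)$ escapes to infinity, and $\cm_0$, $\cm_1$, $\cm_2$, $\ccm$ blow up in the direction of $v_i$ rather than collapsing toward a point of $\S^2$. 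So the relevant boundary is $\bd D$, which consists almost entirely of pieces of hyperbolic planes interior to $\HH^3$, with only the finitely many tangency points of $P$ as ideal points; the ``identity on the boundary sphere'' picture, which is correct for densities on $\S^2$ as in Springborn and Baden--Crane--Kazhdan, is precisely what breaks for Koebe polyhedra, and is the reason the paper cannot reduce to those results.

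Consequently the topological step cannot be a bare degree or Poincar\'e--Bendixson argument, and no convex potential is exhibited (the paper's key Lemma~\ref{lem:integralcurves} must even allow for closed integral curves). What the paper does instead is: (a) for $\cc$ and $\IC$, a direct geometric argument growing equidistant shells around the planes $V_i$ until two non-adjacent ones first meet; (b) for the smooth cases, a lemma stating that a positive-coefficient field on $D$ with prescribed behavior near each $V_i$, each $F_j$, and each tangency point must vanish somewhere, proved by contradiction via a connectedness/Mayer--Vietoris argument in a thin collar of $\bd D$ --- crucially invoking Steinitz's theorem ($3$-connectivity of the edge graph) to produce a path between two vertex-regions avoiding a prescribed edge; and (c) case-by-case asymptotics in the Poincar\'e half-space model near each tangency point to verify the lemma's hypotheses. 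None of (a)--(c) is recoverable from your outline: the combinatorial input from Steinitz's theorem is entirely absent, and it is the ingredient that replaces your Brouwer step. Your Kakutani suggestion for $\IC$ is also not what is needed, since the difficulty there is again the geometry of $D$, not the set-valuedness.
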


We remark that even though it seems difficult to state formally under which conditions the method of the proof of Theorem~\ref{thm:main} works, it is plausible to assume that our key lemma, Lemma~\ref{lem:integralcurves}, holds for a rather large class of hyperbolic vector fields, implying the statement of Theorem~\ref{thm:main} for many other possible notions of polyhedron centers.

In the next theorem, which can be regarded as a generalization of Theorem~\ref{thm:Springborn}, by a spherical cap on $\S^d$ we mean a $d$-dimensional closed spherical ball of spherical radius $0 < \rho < \frac{\pi}{2}$. Furthermore, if $T : \S^d \to \S^d$ is a M\"obius transformation, then by $\rho_T(C)$ and $c_T(C)$ we denote the center and the spherical radius of the spherical cap $T(C)$, respectively.  

\begin{thm}\label{thm:circles}
Let $C_1, C_2, \ldots, C_n \subset \S^d$ be spherical caps such that the union of their interiors is disconnected. For $i=1,2,\ldots,n$, let $w_i:\left(0, \frac{\pi}{2} \right) \to (0,\infty)$ be $C^{\infty}$-class functions satisfying $\lim_{t \to \frac{\pi}{2}-0} w_i(t) = \infty$ for all values of $i$.
For any point $q \in \S^d$, let $I(q)$ denote the set of the indices of the spherical caps whose boundary contains $q$, and assume that for any
$q \in \S^d$, we have
\begin{equation}\label{eq:circle_condition}
\lim_{t \to \frac{\pi}{2}-0} \sum_{i \in I(q)} w_i(t) \cos t < \lim_{t \to 0+0} \sum_{i \notin I(q)} w_i(t).
\end{equation}
Then there is a M\"obius transformation $T : \S^d \to \S^d$ such that
\begin{equation}\label{eq:vertices}
\sum_{i=1^n} w_i(\rho_T(C_i)) c_T(C_i) = o.
\end{equation}
\end{thm}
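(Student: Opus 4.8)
The plan is to translate the statement into the existence of a zero of a vector field on hyperbolic space, and then to invoke our key lemma, Lemma~\ref{lem:integralcurves}.

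First I would set up the hyperbolic picture. Working in the conformal ball model $\BB^{d+1}$ of $\HH^{d+1}$, the ideal boundary is $\S^d$ and the M\"obius transformations of $\S^d$ are exactly the isometries of $\HH^{d+1}$. Every such transformation factors as $R\circ T_p$, where $R$ fixes the center $o$ and, for $p\in\HH^{d+1}$, $T_p$ is the translation carrying $p$ to $o$; since replacing $T$ by $R\circ T$ only rotates the points $c_T(C_i)$ and leaves the radii $\rho_T(C_i)$ unchanged, it suffices to produce a single $p$ with $\sum_{i=1}^n w_i(\rho_{T_p}(C_i))\,c_{T_p}(C_i)=o$. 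Each cap $C_i$ is bounded by a $(d-1)$-sphere spanning a totally geodesic hyperplane $H_i\subset\HH^{d+1}$, and a direct computation gives $\cos\rho_{T_p}(C_i)=\tanh\dist(p,H_i)$ on the open half-space $\Omega_i$ bounded by $H_i$ that contains $o$; thus $\rho_{T_p}(C_i)\to\frac{\pi}{2}$ as $p\to H_i$, while $\rho_{T_p}(C_i)\to 0$ as $p$ tends to an ideal point exterior to $C_i$. The relevant domain is therefore $\Omega:=\bigcap_{i=1}^n\Omega_i$: an open, geodesically convex region (an open cell) containing $o$, whose finite boundary lies on $\bigcup_i H_i$ and whose ideal boundary is $K:=\S^d\setminus\bigcup_i\inter C_i$.

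Next I would build the vector field. On $\Omega$ all the radii lie in $\left(0,\frac{\pi}{2}\right)$ and depend real-analytically on $p$, so by the $C^\infty$-hypothesis on the $w_i$ the map $p\mapsto\sum_i w_i(\rho_{T_p}(C_i))\,c_{T_p}(C_i)\in\Re^{d+1}$ is smooth; carrying its value into $T_p\HH^{d+1}$ by the differential at $o$ of $T_p^{-1}$ yields a smooth vector field $X$ on $\Omega$ whose zeros are exactly the points $p$ that solve the problem. I would record the structure of $X$: its $i$-th summand is directed along the geodesic through $p$ orthogonal to $H_i$ and has hyperbolic length $2w_i(\rho_{T_p}(C_i))$, and up to a fixed nonzero constant $\mu$ one has $X=\mu\grad\Phi$ for $\Phi(p):=\sum_i\Phi_i(\dist(p,H_i))$ with $\Phi_i'(\sigma)=w_i(\arccos(\tanh\sigma))>0$; in particular $X$ is gradient-like, so $\Phi$ is strictly monotone along every nonconstant integral curve and there are no nonconstant closed orbits.

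The heart of the matter is the behaviour of $X$ near $\partial\Omega$. Near an interior point of a wall $H_i$ (and, similarly, near the lower-dimensional faces) the hypothesis $w_i(t)\to\infty$ as $t\to\frac{\pi}{2}-0$ forces the $i$-th summand of $X$ to blow up transversally to $H_i$, dominating the bounded contributions of the other caps. Near an ideal point $q\in K$ one splits the sum according to $I(q)$: the indices $i\notin I(q)$ have $q$ exterior to $C_i$, so $\rho_{T_p}(C_i)\to 0$ and, as $\dist(p,H_i)\to\infty$, the term $\Phi_i(\dist(p,H_i))$ grows like $\big(\lim_{t\to 0^+}w_i(t)\big)\dist(p,H_i)$; the indices $i\in I(q)$ have $q\in\partial C_i$, so $\rho_{T_p}(C_i)\to\frac{\pi}{2}$, and since $\arccos(\tanh\sigma)\approx\frac{\pi}{2}-\sigma$ one finds $\Phi_i(\dist(p,H_i))$ asymptotic to $\big(\lim_{t\to\pi/2-0}w_i(t)\cos t\big)\log\dist(p,H_i)+O(1)$ as $\dist(p,H_i)\to 0$. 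Comparing the two rates at which $p$ approaches $q$, hypothesis \eqref{eq:circle_condition} is exactly what makes $\Phi(p)\to+\infty$ as $p\to q$, for every $q\in K$. Together with the blow-up of $X$ along the walls, and a compactness argument on $\S^d$ making the estimates uniform, this puts us in the situation to which Lemma~\ref{lem:integralcurves} applies and furnishes a zero $p_0\in\Omega$ of $X$; then $T:=T_{p_0}$ satisfies \eqref{eq:vertices}.

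The assumption that $\bigcup_i\inter C_i$ be disconnected enters to keep the configuration non-degenerate: it forces the region along which $X$ blows up inward (a neighbourhood of the walls) to have at least two components, so that $\Phi$ has at least two "valleys" separated by $K$ and Lemma~\ref{lem:integralcurves} yields a critical point in mountain-pass fashion — for $n=1$, by contrast, $X$ is nowhere zero and every integral curve escapes. This is the analogue of the hypothesis $n\ge 3$ in Theorem~\ref{thm:Springborn}. I expect the technical core to be the estimate near $K$: one must pin down the asymptotics of $\rho_{T_p}(C_i)$ and $c_{T_p}(C_i)$ as $p$ tends to an ideal point, extract precisely the coefficients $\lim_{t\to 0^+}w_i(t)$ and $\lim_{t\to\pi/2-0}w_i(t)\cos t$ appearing in \eqref{eq:circle_condition}, and do this uniformly in $q$; by comparison, the analysis at the walls and the final appeal to Lemma~\ref{lem:integralcurves} should be comparatively routine.
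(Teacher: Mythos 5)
Your setup coincides with the paper's: the same domain $D=\HH^{d+1}\setminus\bigcup_i\bar H_i$, the same vector field built from the coefficients $w_i(\arccos\tanh d_i(p))$ attached to the unit normals $v_i(p)$, the observation that it is a gradient field and hence has no closed orbits, the blow-up analysis at the walls, and the identification of the two limits in \eqref{eq:circle_condition} with the asymptotics of the field near an ideal point of $D$. The gap is in the concluding step. Lemma~\ref{lem:integralcurves} cannot be invoked here: it is formulated for the domain cut out by the vertex \emph{and} face circles of a Koebe polyhedron, its hypotheses (ii)--(iv) refer to the planes associated to face circles and to tangency points of $P$, and its proof runs through Steinitz's theorem applied to the edge graph of $P$. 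None of that structure exists for an arbitrary family of caps on $\S^d$, so the lemma's hypotheses cannot even be stated, let alone verified. Your fallback --- that disconnectedness of the walls gives two ``valleys'' and hence a critical point ``in mountain-pass fashion'' --- is a heuristic, not an argument: on the noncompact domain $D$ a mountain-pass conclusion needs a compactness (Palais--Smale type) input you have not supplied, and your claim that $\Phi(p)\to+\infty$ as $p\to q$ for every ideal point $q$ is not uniform (approaching $q$ while hugging a wall $H_i$ with $i\in I(q)$, the term $c_i\log\dist(p,H_i)$ you isolated tends to $-\infty$ and competes with the $\log t$ term).

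What the paper actually does at this point is simpler and purely topological. It proves (Lemma~\ref{lem:intcurves_ddim}) that (a) every integral curve entering a small enough neighbourhood of a finite boundary point terminates on one of the hyperplanes through that point, and (b) no integral curve terminates at an ideal point; (b) is established not through the potential but by exhibiting a neighbourhood basis of $q$, bounded by the auxiliary vertical half-spaces furnished by Lemma~\ref{lem:polytopes} together with a high horosphere, on which \eqref{eq:circle_condition} forces the $x_{d+1}$-component of the field to be negative, so the flow exits each such neighbourhood. Consequently every nonconstant integral curve terminates on $\bigcup_i H_i$, which by hypothesis has $r\geq 2$ connected components; the basins $Y_1,\ldots,Y_r$ of these components are nonempty, open, disjoint, and cover $D$, while $D$ is convex and hence connected --- a contradiction. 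This is where the disconnectedness hypothesis actually enters, and it is the step missing from your proposal; with it (and with the uniformity in (b) carried out as above) your outline becomes the paper's proof.
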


In Section~\ref{sec:prelim}, we introduce our notation and the concepts in our theorems. In addition, we decribe the method of the proofs, and collect some observations that we are going to use. As the simplest case, we prove Theorem~\ref{thm:main} for $\cc(\cdot)$ and $\IC(\cdot)$ in Section~\ref{sec:spheres}.
In Section~\ref{sec:integralcurves} we prove our key lemma, Lemma~\ref{lem:integralcurves}, which is necessary to prove the rest of the cases in Theorem~\ref{thm:main}.
In Section~\ref{sec:main} we prove Theorem~\ref{thm:main} for all the remaining cases based on this lemma.
We continue with the proof of Theorem~\ref{thm:circles} in Section~\ref{sec:circles}.
Finally, in Section~\ref{sec:remarks} we collect some additional remarks and questions.
We note that some elements of the proof can be found in \cite{Springborn}.

\section{Preliminaries}\label{sec:prelim}

\subsection{Polyhedron centers}\label{subsec:centers}

Let $P$ be a convex polyhedron in the Euclidean $3$-space $\Re^3$. For $k=0,1,2,3$, let $\skel_k(P)$ denote the $k$-skeleton of $P$.
Then the \emph{center of mass of $\skel_k(P)$} is defined in the usual way as
\[
\cm_k(P) = \frac{\int_{p \in \skel_k(P)} p \, dv_k}{\int_{p \in \skel_k(P)}\, dv_k},
\]
where $v_k$ denotes $k$-dimensional Lebesgue measure.

The next concept was defined for polygons in \cite{Adler} and for simplicial polytopes in \cite{Tabachnikov} (see also \cite{Akopyan}).
Before introducing it, we point out that the circumcenter of a nondegenerate simplex is the center of the unique sphere containing all vertices of the simplex, and thus, it may be different from the center of the smallest ball containing the simplex.

\begin{defn}\label{defn:ccm}
Let $P$ be an oriented simplicial polytope, and let $o$ be a given reference point not contained in any of the facet hyperplanes of $P$. Triangulate $P$ by simplices whose bases are the facets of $P$ and whose apex is $o$. Let $p_i$ and $m_i$ denote, respectively, the circumcenter and the volume of the $i$th such simplex.
Then the \emph{circumcenter of mass} of $P$ is defined as
\[
\ccm(P) = \frac{\sum_i m_i p_i}{ \sum_i m_i} .
\]
\end{defn}

The authors of \cite{Tabachnikov} show that the circumcenter of mass of a simplicial polytope $P$ is
\begin{itemize}
\item independent of the choice of the reference point, 
\item remains invariant under triangulations of $P$ if no new vertex is chosen from the boundary of $P$.
\item satisfies Archimedes' Lemma: if we decompose $P$ into two simplicial polytopes $Q_1$ and $Q_2$ in a suitable way, then $\ccm(P)$ is the weighted average of $\ccm(Q_1)$ and $\ccm(Q_2)$, where the weights are the volumes of $Q_1$ and $Q_2$,
\item if $P$ is inscribed in a sphere, then its circumcenter of mass coincides with its circumcenter.
\end{itemize}
In addition, they use this point to define the \emph{Euler line} of a simplicial polytope as the affine hull of the center of mass of $P$ and $\ccm(P)$. This definition is a generalization of the same concept defined for simplices. They show that for polygons, any notion of `center' satisfying some elementary properties (i.e. it depends analytically on the vertices of the polygon, commutes with dilatations and satisfies Archimedes's lemma) is necessarily a point of the Euler line.

\subsection{Idea of the proof}\label{subsec:idea}

In the following, let $P$ be a Koebe polyhedron.
The centers and the spherical radii of the vertex circles of $P$ are denoted by $v_i \in \S^2$ and $\alpha_i$, respectively, where $i=1,2,\ldots,n$, and the centers and the radii of its face circles by $f_j \in \S^2$ and $\beta_j$, respectively, where $j=1,2,\ldots,m$.
We note that by \cite{Springborn}, we may assume that the barycenter of the tangency points of $P$ is the origin $o$, implying that $o$ is contained both in $P$ and in its dual, or in other words, the radii of all vertex and face circles of $P$ are less than $\frac{\pi}{2}$.
Thus, for any vertex or face circle there is an associated spherical cap, obtained as the union of the circle and its interior.

In the proof, we regard the sphere $\S^2$ (or, in the proof of Theorem~\ref{thm:circles}, $\S^d$) as the set of the ideal points in the Poincar\'e ball model of the hyperbolic space $\HH^3$ (or $\HH^{d+1}$). Thus, every circle on $\S^2$ is associated to a hyperbolic plane, and every spherical cap is associated to a closed hyperbolic half space.
We note that since the Poincar\'e ball model is conformal, the dihedral angle between two circles on $\S^2$ is equal to the dihedral angle between the two corresponding hyperbolic planes (cf. \cite[Observation 0.1]{HuangLiu}).

For the vertex circle with center $v_i$ we denote the corresponding hyperbolic plane by $V_i$ and the associated closed half space by $\bar{V}_i$.
Similarly, the hyperbolic plane corresponding to the face circle with center $f_j$ is denoted by $F_j$, and the associated closed half space by $\bar{F}_j$.
We set $D = \HH^3 \setminus \left( \left( \bigcup_{i=1}^n \bar{V}_i \right) \cup \left( \bigcup_{j=1}^m \bar{F}_j  \right) \right)$.
Observe that as the radii of all vertex and face circles of $P$ are less than $\frac{\pi}{2}$, we have $o \in D$, and thus, $D$ is a nonempty, open convex set in $\HH^3$.

Let $p \in D \subset \HH^3$ be a point. For any plane $V_i$, consider the geodesic line through $p$ and perpendicular to $V_i$.
Let $v_i(p) \in T_p \HH^3$ denote the unit tangent vector of this line at $p$, pointing towards $V_i$, and let $d_i^v(p)$ denote the hyperbolic distance of $p$ from $V_i$.
We define $f_j(p)$ and $d_j^f(p)$ similarly for the plane $F_j$.

An important point of the proof is the following simple observation. Recall that the \emph{angle of parallelism} of a point $p$ not lying on a hyperbolic hyperplane $H$ is the hyperbolic half angle of the hyperbolic cone with apex $p$ formed by the half lines starting at $p$ and parallel to $H$. Thus, Remark~\ref{rem:parallelism} is a consequence of the fact that the Poincar\'e ball model is conformal, and of a well-known hyperbolic formula \cite{Kerekjarto}. Even though we state it for the $3$-dimensional space $\HH^3$, it also holds in any dimensions.

\begin{rem}\label{rem:parallelism}
Let $H$ be a hyperbolic plane in $\HH^3$ whose set of ideal points is a circle $C$ on $\S^2$ with spherical radius $\alpha$. Then $\alpha$ is the angle of parallelism of $H$ from the origin $o$ (cf. Figure~\ref{fig:parallelism}). In particular, $\cos \alpha = \tanh d$, where $d$ is the hyperbolic distance between $H$ and $o$.
\end{rem}

\begin{figure}[ht]
\begin{center}
\includegraphics[width=0.4\textwidth]{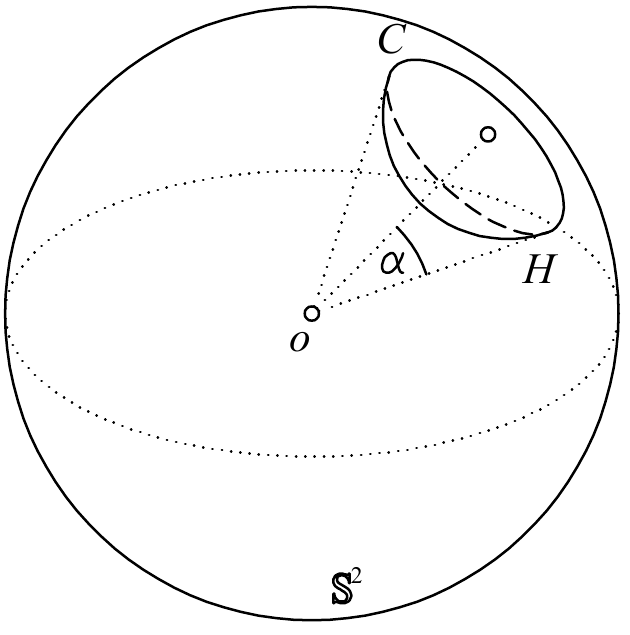}
\caption{The angle of parallelism from the origin of the model is the spherical radius of the circle $C$}
\label{fig:parallelism}
\end{center}
\end{figure}

Among other things, it follows by Remark~\ref{rem:parallelism} that
\begin{equation}\label{eq:angledistance}
\tanh d_i^v(o) = \cos \alpha_i \hbox{ and } \tanh d_j^f(o) = \cos \beta_j \hbox{ for all values of } i, j.
\end{equation}
Furthermore, the metric tensor of the Poincar\'e ball model yields (cf. \cite{Springborn}) that
\begin{equation}\label{eq:gradient}
v_i(o) = \frac{1}{2} v_i \hbox{ and } f_j(o) = \frac{1}{2} f_j \hbox{ for all values of } i,j.
\end{equation}

The idea of the proof of Theorem~\ref{thm:main} in most cases is as follows.
Let $g(\cdot)$ be one of the points in Theorem~\ref{thm:main}.
First, we compute $g(P)$ in terms of the radii and the centers of its vertex and face circles; that is, in a form
$g(P)=\sum_{i=1}^n w_i u_i + \sum_{j=1}^m W_j v_j$, where the coefficients $w_i$ and $W_j$ are smooth functions depending on the values $0 < \alpha_i , \beta_j < \frac{\pi}{2}$. 
Applying the formulas in (\ref{eq:angledistance}) to the coefficients $w_i$ and $W_j$, we obtain a smooth hyperbolic vector field $h : D \to TD$.
Since in this model M\"obius transformations on $\S^2$ are associated to hyperbolic isometries of $\HH^3$, this function has the property that if $T$ corresponds to a hyperbolic isometry that maps $p$ into $o$, then $h(p)= g(T(P))$ for all $p \in D$.
It is well-known that hyperbolic isometries act transitively on $\HH^3$. Thus, to prove the existence of a suitable M\"obius transformation it suffices to prove that 
$h(p) = o_p$ for some $p \in D$.
In the cases of $\cc(\cdot)$ and $\IC(\cdot)$ the function $h$ is not $C^{\infty}$-class; here we use similar, geometric arguments.
In the remaining cases $h$ is smooth; here we examine the properties of the integral curves of $h$.
To prove Theorem~\ref{thm:circles}, we use an analogous consideration.

\subsection{Basic tools}\label{subsec:tools}

In the proof we often use the following geometric observation.

\begin{rem}\label{rem:fv}
For $i=1,2,\ldots, n$ and $j=1,2,\ldots,m$, the $i$h vertex of $P$ is $\frac{v_i}{\cos \alpha_i}$, and the incenter of the $j$th face of $P$ is $\cos \beta_j f_j$.
\end{rem}

Most of the computations will be carried out in the Poincar\'e half space model.

In this model, we regard $\HH^3$ embedded in $\Re^3$ as the open half space $\{ z > 0 \}$. Hyperbolic planes having the `point at infinity' as an ideal point are represented in this model by the intersections of the Euclidean half space $\{ z > 0 \}$ with Euclidean planes parallel to the $z$-axis, we call these hyperbolic planes \emph{vertical}. Hyperbolic planes not having the `point at infinity' as an ideal point are represented by open hemispheres in the Euclidean half space $\{ z > 0 \}$, with their centers on the Euclidean plane $\{z=0\}$, we call these planes \emph{spherical}.
For any plane $H$ in this model, we denote the set of its ideal points, different from the point at infinity, by $H^*$.
We use the same terminology and notation for this model in any dimension.

The last remark in this section is the result of elementary computations using distance formulas from the Poincar\'e half plane model.

\begin{rem}\label{rem:halfplane}
Let $p=(a,t)$, $a,t > 0$ be a point in the Poincar\'e half plane model, and let $u \in T_p \HH^2$ denote the tangent unit vector of the geodesic line through $p$ and perpendicular to the $y$-axis, pointing towards the axis. Furthermore, let $C$ be the hyperbolic line represented by the circle centered at the origin $o$ and Euclidean radius $r$, and let $v\in T_p \HH^2$ denote the tangent unit vector of the geodesic line through $p$ and perpendicular to $C$, pointing towards $C$. Assume that $r < \sqrt{a^2+t^2}$. Then the hyperbolic distance of $p$ from the $y$-axis and from $C$ are $\arsh \frac{a}{t}$ and $\arsh \frac{t^2+a^2-r^2}{2rt}$, respectively.
In addition, the $y$-coordinates of $u$ and $v$ are $\frac{a}{\sqrt{a^2+t^2}}$ and $-\frac{t^2+r^2-a^2}{\sqrt{(r^2+a^2+t^2)^2-4r^2a^2}}$, respectively.
\end{rem}

\section{Proof of Theorem~\ref{thm:main} for $\cc(\cdot)$ and $\IC(\cdot)$}\label{sec:spheres}

First, we prove the statement for $\cc(\cdot)$. During the proof, we set $D^v = \HH^3 \setminus \left( \bigcup_{i=1}^n \bar{V}_i \right)$.
Observe that a ball $B$ is the smallest ball containing $P$ if, and only if it contains $P$, and its center belongs to the convex hull of the vertices of $P$ lying on the boundary of the ball.

Let $I$ be the set of indices such that $\frac{1}{\cos \alpha_i} = \max \left\{ \frac{1}{\cos \alpha_j} : j=1,2,\ldots,n  \right\}$.
Thus, by Remark~\ref{rem:fv}, $o=\cc(P)$ if and only if $o \in \conv \left\{ \frac{1}{\cos \beta_i} v_i : i \in I \right\}$, which is equivalent to $o \in \conv \{ v_i : i \in I \}$. Furthermore, $I$ is the set of indices with the property that $d_i^v(o) = \min \{ d_j^v(o) : j=1,2,\ldots,n \}$.
We may extend this definition for any $p \in D^v$, and let $I(p)$ denote the set of indices with the property that $d_i^v(p) = \min \{ d_j^v(p) : j=1,2,\ldots,n \}$.
Since M\"obius transformations act transitively on $\HH^3$, we need only to show the existence of a point $p \in D^v$ such that $o_p \in \conv \{ v_i(p) \subset T_p \HH^3 : i \in I(p) \}$.

For any plane $V_i$ and $\tau > 0$, consider the set $V_i(\tau)$ of points in $D^v$ at distance at most $\tau$ from $V_i$.
This set is bounded by $V_i$ and a hypersphere, which, in the model, are represented by the intersections of two spheres with the interior of $\S^2$, and share the same ideal points. Hence, if $\tau$ is sufficiently small, then the sets $V_i(\tau)$ and $V_j(\tau)$, where $i \neq j$, intersect if, and only if the $i$th and the $j$th vertices of $P$ are connected by an edge. On the other hand, if $\tau$ is sufficiently large, then all $V_i(\tau)$s intersect.
Let $\tau_0$ be the smallest value such that some $V_i(\tau_0)$ and $V_j(\tau_0)$ intersect, where $i \neq j$ and the $i$th and $j$th vertices are not neighbors, and
let $p \in V_i(\tau_0) \cap V_j(\tau_0)$.
Note that $v_i(p)$ is an inner surface normal of the boundary of $V_i(\tau_0)$ at $p$. Thus, the definition of $\tau_0$ yields that the system of inequalities $\langle x , v_i(p) \rangle > 0$, $i \in I(p)$ has no solution for $x$, from which it follows that there is no plane in $T_p \HH^3$ that strictly separates $o_p$ from the $v_i(p)$s, implying that $o_p \in \conv \{ v_i(p) : i \in I(p) \}$.
This proves the statement for $\cc(\cdot)$.
To prove it for $\IC(\cdot)$, we may apply the same argument for the face circles of $P$.

\section{Proof of Lemma~\ref{lem:integralcurves}}\label{sec:integralcurves}

The main goal of this section is to prove Lemma~\ref{lem:integralcurves}. In its formulation and proof we use the notations introduced in Section~\ref{sec:prelim}.
We note that two hyperbolic planes $V_i$ and $F_j$ intersect if, and only if the $i$th vertex of $P$ lies on the $j$th face of $P$. In this case the two planes have a common ideal point, coinciding with a tangency point of $P$. This point is the ideal point is one pair of $V_i$s and one pair of $F_j$s, and these two pairs are orthogonal.

If $q$ is a boundary point of $D$ in the Euclidean topology, by a \emph{neighborhood} of $q$ we mean the intersection of a neighborhood of $q$ with $D$, induced by the Euclidean topology of $\Re^3$. Before stating our main lemma, we note that if $h : D \to TD$ is a smooth vector field, then by the Picard-Lindel\"of Theorem for any $p \in D$ with $h(p) \neq o$ there is a unique integral curve of $h$ passing through $p$. These integral curves are either closed, or start and end at boundary points of $D$ or at points $q$ with $h(q) = o$.

\begin{lem}\label{lem:integralcurves}
Let 
\[
h : D \to TD, \,  h(p) = \sum_{i=1}^n w_i v_i(p) + \sum_{j=1}^m W_j f_j(p),
\]
be a vector field where the coefficients $w_i$ and $W_j$ are positive smooth functions of $n+m$ variables, depending on $d^v_i(p)$, $i=1,2,\ldots,n$ and $d^f_j(p)$, $j=1,2,\ldots,m$.
Assume that for any boundary point $q$ of $D$,
\begin{itemize}
\item[(i)] $q$ has a neighborhood disjoint from any closed integral curve of $h$.
\item[(ii)] If $q \in G_j$ for some value of $j$, then there is no integral curve of $h$ ending at $j$.
\item[(iii)] If $q \in F_i$ for some value of $i$ and $q \notin G_j$ for all values of $j$, then $q$ has a neighborhood in which the integral curve through any point ends at a point of $F_i$.
\item[(iv)] If $q \in \S^2$ is a tangency point of $P$, then there is a codimension 1 foliation of a neighborhood of $q$ in $D$ such that $q$ is not an ideal point of any leaf, and for any point $p$ on any leaf $h(p) \neq o$, the integral curve through $p$ crosses the leaf, either in the direction of $q$ or from this direction, independently of the choice of $p$, the leaf and $q$.
\end{itemize}
Then $h(p) = o_p$ for some $p \in D$. 
\end{lem}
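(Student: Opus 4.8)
\emph{Plan.} The plan is to suppose $h$ is nowhere zero on $D$ and to derive a contradiction from a Poincar\'e--Hopf index count on a compact core of $D$. Work in the Poincar\'e ball model and let $\overline{D}\subset\overline{\BB}^3$ be the closure of $D$; as $D$ is a bounded open convex set, $\overline{D}$ is a compact convex body, hence a topological $3$-ball, so $\chi(\overline{D})=1$. Its boundary sphere carries a cell decomposition whose $2$-cells are the \emph{vertex $2$-faces} on the planes $V_i$ (one per vertex of $P$) and the \emph{face $2$-faces} on the planes $F_j$ (one per face of $P$), whose edges are the arcs $V_i\cap F_j\cap\partial\overline{D}$ (one per incident vertex--face pair of $P$), and whose vertices are the tangency points of $P$ (one per edge of $P$). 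Here I use that the vertex and face caps of a Koebe polyhedron cover $\S^2$ except at the tangency points, so $\overline{D}$ has no ideal boundary arcs, and that near each tangency point exactly four $2$-faces of $\overline{D}$ meet, two of each kind.

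Next I would turn the orbit-wise hypotheses into transversality. Condition (iii) forces $h$ to point strictly out of $\overline{D}$ along the interiors of the $2$-faces of one of the two families (integral curves end there), condition (ii) forces $h$ to point strictly into $\overline{D}$ along the interiors of the $2$-faces of the other family (no integral curve ends there), condition (i) keeps closed orbits out of a thin collar of $\partial\overline{D}$, and condition (iv) provides, near each tangency point, a codimension-$1$ foliation transverse to $h$ with a common crossing direction, whose leaves serve as rounding level sets. Pushing $\partial\overline{D}$ slightly into $D$, rounding its finite edges (joining the inward and outward faces transversally), and rounding its ideal vertices along the leaves from (iv), I obtain a smooth compact $3$-ball $M\subset D$ along whose boundary $h$ is everywhere transverse. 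Let $\partial_-M$ be the inflow part. Since any two $2$-faces of a fixed family in $\overline{D}$ meet only at tangency-point vertices, and each such vertex lies on exactly two of them, $\partial_-M$ is homotopy equivalent either to a disjoint union of the $F(P)$ (resp. $V(P)$) disks of that family, or to those disks glued at the tangency points, which collapses to the dual graph $P^{*}$ (resp. the $1$-skeleton of $P$) --- which of the four cases occurs is dictated by (ii)--(iii) and by the crossing direction in (iv). In all four cases $\chi(\partial_-M)\in\{\,F(P),\ 2-V(P),\ V(P),\ 2-F(P)\,\}$, and since every polyhedron has at least four vertices and at least four faces, $\chi(\partial_-M)\neq1$.

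The contradiction is then immediate: as $h$ has no zeros in $M$, the Poincar\'e--Hopf theorem for a vector field transverse to the boundary of a compact manifold (equivalently, the Gauss map $h/|h|$ on $\partial M$ has degree $0$ because $h$ extends over $M$ without zeros, while that degree, computed from the inflow/outflow pattern, equals $\chi(M)-\chi(\partial_-M)$) yields
\[
0\;=\;\sum_{\text{zeros of }h\text{ in }M}\mathrm{ind}(h)\;=\;\chi(M)-\chi(\partial_-M)\;=\;1-\chi(\partial_-M)\;\neq\;0,
\]
which is absurd. Hence $h(p)=o_p$ for some $p\in D$. The same argument can be packaged as a retraction --- the backward flow would retract the contractible $\overline{D}$ onto $\partial_-\overline{D}$, a space with free, positive-rank fundamental group --- but I prefer the index form because it uses nothing about the orbits in the interior of $D$, and in particular is insensitive to recurrent or closed orbits there, which the hypotheses do not rule out away from $\partial\overline{D}$.

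\emph{Main obstacle.} The step I expect to be the real obstacle is the middle one: upgrading the soft, one-orbit-at-a-time conditions (i)--(iv) to an actual smooth compact core $M$ whose boundary $h$ meets transversally with the stated inflow/outflow pattern, \emph{and} verifying that the rounding creates no new zeros of $h$. The difficulty sits on the non-smooth strata of $\partial\overline{D}$: the finite edges $V_i\cap F_j$, where an outflow $2$-face abuts an inflow one, and especially the ideal vertices, which are at once points at infinity of $\HH^3$ and corners where four $2$-faces meet. Hypothesis (iv) is designed precisely for the latter, with its foliation playing the role of a controlled rounding and its uniform crossing direction ensuring that all $E(P)$ ideal vertices are resolved the same way (so that $\chi(\partial_-M)$ cannot accidentally become $1$ through a mixed configuration). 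Checking that the local pictures along all these strata patch into one globally transverse $\partial M$, and that (i) indeed permits a thin enough collar, is where essentially all the work lies; the cell count of $\overline{D}$, the Euler characteristics, and the index identity are routine.
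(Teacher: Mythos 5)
Your strategy --- Poincar\'e--Hopf on a smoothed compact core $M\subset D$, with the contradiction coming from $\chi(\partial_-M)\neq 1$ --- is genuinely different from the paper's, and your Euler-characteristic bookkeeping at the end (the four values $F$, $2-V$, $V$, $2-F$, none equal to $1$) is correct. The paper instead classifies the orbits in a thin collar $D_\varepsilon$ of $\bd D$ by where they terminate: the sets $Y_i$ of points whose orbit ends on the $i$th vertex plane are open by (iii), the sets $W_s$ of points whose orbit ends at a tangency point $s$ are closed, conditions (i) and (ii) together with the no-zeros assumption show the collar is exhausted by these sets, and the contradiction is with connectedness of $D_\varepsilon$: either all $W_s$ are empty and the collar splits into finitely many disjoint nonempty open $Y_i$, or a Mayer--Vietoris lemma shows that some single $W_s$ must separate two sets $Y_k$, $Y_l$, which is impossible because Steinitz's theorem provides a path in the edge graph from vertex $k$ to vertex $l$ avoiding the edge $s$. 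That argument never needs $h$ to be transverse to anything except the foliation supplied by (iv).

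Transversality is exactly where your proof has a genuine gap, and it is not merely the ``patching'' issue you flag: hypotheses (i)--(iv) are statements about the asymptotic behaviour of individual orbits, not about the sign of the normal component of $h$ near the boundary strata, so the decomposition $\partial M=\partial_-M\cup\partial_+M$ you need is not determined by them. Condition (ii) says only that no integral curve \emph{terminates} on a face plane; this is perfectly compatible with $h$ being tangent to, or crossing in both directions, every interior hypersurface parallel to that plane, so it does not ``force $h$ to point strictly into $\overline D$'' there. Condition (iii) guarantees that orbits near a vertex plane eventually terminate on it, but an orbit may approach and recede several times first, so no pushed-in parallel copy of the plane need be transverse to $h$; and nothing at all is assumed about the direction of $h$ along the finite edges where a vertex plane meets a face plane, precisely where your rounding must interpolate between an alleged outflow face and an alleged inflow face. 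Only (iv), at the ideal vertices, hands you an honest transverse family of leaves. To run your index argument one would have to strengthen (ii) and (iii) to transversality conditions --- which, to be fair, do hold in every application in Section 5, where the paper verifies that $h$ is ``almost orthogonal'' to the relevant planes, but which the lemma as stated does not assume. (A secondary quibble: $\overline D$ is hyperbolically, not Euclidean, convex in the ball model, so ``compact convex body'' is not the right reason it is a topological $3$-ball, though that conclusion is correct.)
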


First, we prove Lemma~\ref{lem:topological}, where, by $\BB^d$, we mean the closed $d$-dimensional Euclidean unit ball centered at $o$.

\begin{lem}\label{lem:topological}
Let $X = (\inter \BB^{d+1}) \setminus (1-\varepsilon) \BB^{d+1}$, where $0 < \varepsilon < 1$, and $d \geq 2$.
Let $Z_1, \ldots, Z_k$ be pairwise disjoint closed sets in $X$, where $k \geq 1$.
If $X \setminus Z_i$ is connected for all values of $i$ then $X \setminus \left( \bigcap_{i=1}^k Z_i \right)$ is connected.
\end{lem}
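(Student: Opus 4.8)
The plan is to separate the degenerate case $k=1$ from the case $k\ge 2$, in which the pairwise-disjointness hypothesis forces the intersection to be empty. For $k=1$ there is nothing to prove: $\bigcap_{i=1}^1 Z_i = Z_1$, so the assertion that $X\setminus\bigcap_{i=1}^1 Z_i$ is connected is verbatim the hypothesis that $X\setminus Z_1$ is connected, and the conclusion follows at once.

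The only point requiring an argument is the case $k\ge 2$. Here I would first observe that pairwise disjointness already gives $Z_1\cap Z_2=\emptyset$, whence $\bigcap_{i=1}^k Z_i\subseteq Z_1\cap Z_2=\emptyset$; thus the intersection is empty and $X\setminus\bigcap_{i=1}^k Z_i=X$. The claim therefore reduces to the connectedness of the open shell $X=\{x\in\Re^{d+1}: 1-\varepsilon<|x|<1\}$. I would establish this directly by producing a path between any two points $x,y\in X$: slide $x$ radially to the sphere of radius $r$ for a fixed $r\in(1-\varepsilon,1)$, do the same for $y$, and join the two resulting points by an arc on that sphere, which exists because $\S^d$ is path-connected for $d\ge 1$. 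Equivalently, the map $x\mapsto(|x|,x/|x|)$ is a homeomorphism of $X$ onto the product $(1-\varepsilon,1)\times\S^d$ of connected spaces, so $X$ is connected.

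There is essentially no obstacle in the argument; the one thing worth flagging is the role of the hypotheses. The pairwise-disjointness assumption is precisely what empties the $k$-fold intersection as soon as $k\ge 2$, so the hypothesis that each $X\setminus Z_i$ is connected is in fact invoked only in the degenerate case $k=1$. Likewise, the connectedness of $X$ needs only $d\ge 1$, so the stated assumption $d\ge 2$ is stronger than what this particular conclusion requires; presumably the dimensional bound is dictated by the later applications of the lemma rather than by this statement itself.
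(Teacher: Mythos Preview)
Your argument is correct for the statement exactly as printed: with $\bigcap_{i=1}^k Z_i$, pairwise disjointness trivializes the $k\ge 2$ case and nothing beyond connectedness of $X$ is needed. You have also rightly noted that the hypotheses ``$X\setminus Z_i$ connected'' and ``$d\ge 2$'' play essentially no role.

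However, the printed $\bigcap$ is a typo for $\bigcup$. Two pieces of internal evidence make this unmistakable. First, the paper's own proof sets $Z'=\bigcup_{i=1}^{k-1}Z_i$, writes $\overline{Z_k}=X\setminus Z_k$ and $\overline{Z'}=X\setminus Z'$, and applies Mayer--Vietoris to the open cover $X=\overline{Z_k}\cup\overline{Z'}$ in order to compute $H_0(\overline{Z_k}\cap\overline{Z'})$; but $\overline{Z_k}\cap\overline{Z'}=X\setminus(Z_k\cup Z')=X\setminus\bigcup_{i=1}^k Z_i$, so what is being shown connected is the complement of the \emph{union}. Second, in the proof of Lemma~\ref{lem:integralcurves} the lemma is invoked in contrapositive form: from the fact that $D_\varepsilon\setminus\bigcup_s W_s$ (a disjoint union of several nonempty open $Y_i$'s) is disconnected, they deduce that some individual $D_\varepsilon\setminus W_s$ is disconnected. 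That inference requires the union version; the intersection version would say nothing.

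So while your write-up disposes of the literal assertion, it does not address the intended, non-trivial claim. For that claim the paper's inductive Mayer--Vietoris computation is the substance: using that $\S^d$ is a deformation retract of $X$ with $d\ge 2$, one has $H_1(X)=0$ and $H_0(X)\cong\mathbb{Z}$; exactness then forces $\rank H_0\bigl(X\setminus\bigcup_i Z_i\bigr)=1$. Here the hypothesis $d\ge 2$ is genuinely used (to kill $H_1(X)$), and the hypothesis that each $X\setminus Z_i$ is connected is used at every step of the induction, not only for $k=1$. If you want to supply a proof matching the paper's, this is the argument to reproduce; your current text should at least flag the typo and then treat the union case.
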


\begin{proof}[Proof of Lemma~\ref{lem:topological}]
We prove the assertion by induction for $k$.
If $k=1$, then the statement is trivial. Assume that Lemma~\ref{lem:topological} holds for any $k-1$ closed sets.
Let $Z'= \bigcup_{i=1}^{k-1} Z_i$. Then $\overline{Z}_k = X \setminus Z_k$ and $\overline{Z'} = Z \setminus Z'$ are open sets whose union is $X$. Consider the Mayer-Vietoris exact sequence \cite{Massey} of these subspaces:
\[
H_1(X) \to H_0(\overline{Z_k} \cap \overline{Z'}) \to H_0(\overline{Z_k} \oplus \overline{Z'}) \to H_0(X) \to 0 .
\]
Note that by the induction hypothesis, $\overline{Z'}$ is connected.
On the other hand, since $\S^d$ is a deformation retract of $X$, their homology groups coincide, implying that
$\rank H_1(X) = 0$, $\rank H_0(X) = 1$.
Since $X$ is locally path-connected, any connected subset of $X$ is path-connected, and thus, $\rank H_0(X)$ is the number of connected components of $X$, implying that $\rank(H_0(\overline{Z_k} \oplus \overline{Z'})) = 2$, and $\rank(H_0(\overline{Z_k} \cap \overline{Z'}))=t$, where $t$ is the number of the connected components of $\overline{Z_k} \cap \overline{Z'}$.
The exactness of the Mayer-Vietoris sequence yields that $1-2+t=0$, that is, $t=1$.
\end{proof}

\begin{proof}[Proof of Lemma~\ref{lem:integralcurves}]
We prove Lemma~\ref{lem:integralcurves} by contradiction.
Assume that $h(p) \neq o$ for any $p \in D$, and let $S$ denote the set of tangency points of $P$.
Furthermore, let $Z$ denote the set of the points of $D$ belonging to a closed integral curve. For $i=1,2,\ldots,n$, let $Y_i$ denote the set of points whose integral curve terminates at a point of $H_i$, and let $W_s$ be the set of the points with their integral curves ending at $s \in S$. By (iii), every set $Y_i$ is open, and it is easy to see that every set $W_s$ is closed.

First, assume that for any $s \in S$, the integral curve through any point $p$ on a leaf of the codimension 1 foliation in a neighborhood of $s$ points away from the direction of $s$. This implies, in particular, that $W_s = \emptyset$ for all $s \in S$.
For all $q \in \bd D$, let $V_q$ denote a neighborhood of $q$ satisfying the conditions of the lemma. By the definition of induced topology, $V_q = V_q^*$ for some neighborhood of $q$ in $\Re^3$. We may assume that $V_q^*$ is open for all $q \in \bd D$.
Since the sets $V_q^*$ cover the compact set $\bd D$, we may choose a finite subfamily that covers $\bd D$. By finiteness, it follows that there is some $\varepsilon > 0$ such that the set $D_{\varepsilon}$ of points at Euclidean distance less than $\varepsilon$ from $\bd D$ is disjoint from $Z$.
On the other hand, $D_{\varepsilon}$ is connected, yet it is the disjoint union of the finitely many open sets $Y_i \cap D_{\varepsilon}$, a contradiction.

Assume now that that for any $s \in S$, the integral curve through any point $p$ on a leaf of the codimension 1 foliation in a neighborhood $V_s$ of $s$ points towards $s$. By this, if $s \in S$ is the tangency point connecting the $i$th and $j$th vertices, then $V_s \subseteq W_s \cup Y_i \cup Y_j$.
On the other hand, by (iii), all $Y_i$s are connected. Thus, for any walk on the edge graph of $P$ starting at the $k$th and ending at the $l$th vertex, there is a continuous curve in $D$ starting at a point of $Y_k$ and ending at a point of $Y_l$, and passing through points of only those $Y_i$s and $W_s$s for which the associated vertices and edges of $P$ are involved in the walk. In addition, the curve may pass arbitrarily close to $\bd D$, measured in Euclidean metric.

We choose the set $D_{\varepsilon}$ as in the previous case. Note that $D_{\varepsilon}$ is homeomorphic to $(\inter \BB^3) \setminus (1-\varepsilon) \BB^3$, and thus, we may apply Lemma~\ref{lem:topological} with the $W_s$s playing the roles of the $Z_j$s. Then it follows that for some $s \in S$, $D_{\varepsilon} \setminus W_s$ is disconnected. Since the union of finitely many closed sets is closed, there are some $Y_k$ and $Y_l$ in different components.
By Steinitz's theorem \cite{Steinitz1, Steinitz2}, there is a path in the edge graph of $P$ that connects the $k$th and $l$th vertices and avoids the edge associated to $s$. Hence, there is a continuous curve in $D$, starting at a point of $Y_k$ and ending at a point of $Y_l$ that avoids $W_s$; a contradiction.
\end{proof}

\section{Proof of Theorem~\ref{thm:main}}\label{sec:main}

\subsection{Barycenter of the vertices: $\cm_0(P)$}

We show that Theorem~\ref{thm:main} for the barycenter of its vertices is an immediate consequence of Theorem~\ref{thm:circles}.

By Remark~\ref{rem:fv}, we have $\cm_0(P) = \frac{1}{n} \sum_{i=1}^n \frac{1}{\cos \alpha_i} v_i$.
Thus, it is sufficient to show that the conditions of Theorem~\ref{thm:circles} are satisfied for the family of vertex circles of $P$
with the weight functions $w_i(t) = \frac{1}{\cos t}$ for all $i$s.

First, observe that if $n=4$ (i.e. if $P$ is a tetrahedron), then $\cm_0(P) = o$ if $P$ is regular.
Thus, we may assume that $n \geq 5$. Note that the weight functions $w_i(t) = \frac{1}{\cos t}$ are positive smooth functions on $\left( 0, \frac{\pi}{2} \right)$ and satisfy $\lim_{t \to \frac{\pi}{2}} w_i(t) = \infty$. Furthermore, since
$| I(q)|  \leq 2$ for all points $q \in \S^2$, the inequality in (\ref{eq:circle_condition}) holds, and Theorem~\ref{thm:circles} implies Theorem~\ref{thm:main} for $\cm_0(\cdot)$.

\subsection{Center of mass of the wire model: $\cm_1(P)$}\label{subsec:cm1}

Let $E$ denote the set of edges of the edge graph of $P$; that is, $\{i,j\} \in E$ if, and only if the $i$th and $j$th vertices are connected by an edge.
An elementary computation yields that if $\{i,j\} \in E$, the length of the corresponding edge of $P$ is $\tan \alpha_i + \tan \alpha_j$, and its center of mass is $\frac{1}{2} \left( \frac{v_i}{\cos \alpha_i} + \frac{v_j}{\cos \alpha_j} \right)$.
Thus, letting $A= \sum_{\{i,j\} \in E} \left( \tan \alpha_i + \tan \alpha_j \right)$, we have
\begin{equation}\label{eq:cm1}
\cm_1(P) = \frac{1}{2A} \sum_{\{i,j\} in E}  \left( \tan \alpha_i + \tan \alpha_j \right) \left( \frac{v_i}{ \cos \alpha_i} + \frac{v_j}{\cos \alpha_j} \right) .
\end{equation}

Set $D^v = \HH^3 \setminus \left( \bigcup_{i=1}^n \bar{V}_i \right)$, and define the function $h^v : D^v \to T D^v$ as
\begin{equation}\label{eq:cm1_hyp}
h^v(p)= \sum_{\{i,j\} \in E}  \left( \frac{1}{\sinh d^v_i(p)} + \frac{1}{\sinh d^v_j(p)} \right) \left( \coth (d_i(p)) v_i(p) + \coth(d_j(p))  v_j(p) \right).
\end{equation}
Then $h^v$ is a smooth function on $D^v$ and the coefficient of each vector $v_i(p)$ is positive.
By Remark~\ref{rem:parallelism}, it follows that if there is a point $p \in D^v$ such that $h^v(p) = o$, then, choosing a M\"obius transformation $T$ that maps $p$ into $o$, we have $\cm_1(T(P)) = o$.
We denote the restriction of $h^v$ to $D$ by $h$, and show that $h$ satisfies the conditions in Lemma~\ref{lem:integralcurves}. 

Let $q$ be a boundary point of $D$ in some plane $F_j$ associated to a face circle of $P$. Assume that $q$ is not contained in $V_i$ for any value of $i$.
Observe that if the $i$th vertex lies on the $j$th face, then $v_i(q)$ and $f_j(q)$ are orthogonal, and otherwise $v_i(q)$ points inward to $D$.
Thus, by the continuity of $h^v$, there is no integral curve of $h$ that ends at $q$, and $q$ has a neighborhood disjoint from the set $Z$ of the points of the closed integral curves of $h$. If $q$ is contained in $V_i$ for some $i$, then a slight modification of this argument can be applied. This proves (ii) in Lemma~\ref{lem:integralcurves}.

Let $q$ be a point of some $V_i$ not contained in any of the $F_j$s. Then, denoting the coefficient of $v_j(p)$ by $\mu_j(p)$ for any $j$, we have that $\frac{\mu_i(p)}{\mu_j(p)} \to \infty$ for all $j \neq i$, as $p \to \infty$, which shows that if $p$ is `close' to $q$, then $h(p)$ is `almost orthogonal' to $F_j$. This shows (iii), and the fact that a neighborhood of $q$ is disjoint from $Z$.

Finally, let $q$ be a tangency point of $P$. Without loss of generality, we may assume that $q$ is the ideal point of $V_1$, $V_2$, $F_1$ and $F_2$.
To prove (iv), we imagine the configuration in the Poincar\'e half space model, with $q$ as the `point at infinity';
geometrically, it means that we apply an inversion to $\Re^3$ about a sphere centered at $q$.
Then $D$ is contained in the half-infinite cylinder bounded by the four vertical planes $V_1, V_2, F_1$ and $F_2$ (for the definition of vertical and spherical plane, see Subsection~\ref{subsec:idea}). Note that the cross section of this cyclinder is a rectangle, and that all other $V_i$s and $F_j$s are spherical planes centered at ideal points of $D$ in the Euclidean plane $\{ z=0 \}$.

For any $t > 0$, let $D_t$ denote the intersection of the set $\{ z = t \}$ with $D$. We remark that $\{ z = t \}$ is a horosphere whose only ideal point is $q$, and thus, the sets $D_t$, where $t$ is sufficiently large, form a codimension $1$ foliation of a neighborhood of $q$ in $D$.
Hence, to show that the conditions of Lemma~\ref{lem:integralcurves} are satisfied, it is sufficient to show that if $t$ is sufficiently large, then $h(p)$ has a positive $z$-coordinate for any $p \in D_t$.

For any $\{ i,j\} \in E$, denote the term in $h(p)$ belonging to $\{ i,j\}$ by $h_{i,j}(p)$, and the $z$-coordinate of $h_{i,j}(p)$
by $z_{i,j}(p)$.
Let $\{ i,j\}$ and $\{1,2\}$ be disjoint. Note that the closure of $D_t$ is compact. Thus, by Remark~\ref{rem:halfplane}, if $t \to \infty$, then $h(p)$ uniformly converges to $0$.
Assume that $\{i,j\} \cap \{ 1,2\}$ is a singleton, say $i=1$ and $j \neq 2$. Then, by Remark~\ref{rem:halfplane}, the $z$-coordinate of $\coth d^v_1(p) v_1(p)$ is $1$, and that of $\coth d^v_j(p) v_j(p)$ is less than $1$. Thus, $z_{1,j}(p) > 0$ in this case.
Finally, $z_{1,2}(p) > C t$ for any $p \in D_t$ for some universal constant $C> 0$.
Thus $h(p)$ has a positive $z$-coordinate for large values of $t$, and Lemma~\ref{lem:integralcurves} implies Theorem~\ref{thm:main} for the case of $\cm_1(\cdot)$.

\subsection{Center of mass of the paper model: $\cm_2(P)$}\label{subsec:cm2}

Let $I$ denote the edge set of the vertex-face incidence graph of $P$; that is, $(i,j) \in I$ if, and only if the $i$th vertex lies on the $j$th face.
Consider some $(i,j) \in I$. Then there are exactly two edges of $P$ adjacent to both the vertex and the face.
Let the tangency points on these two edges be denoted by $e^1_{i,j}$ and $e^{2}_{i,j}$. Then, by Remark~\ref{rem:fv}, the points $\frac{v_i}{\cos \alpha_i}$, $e^{1}_{i,j}$, $\cos \beta_j f_j$ and $e^2_{i,j}$ are coplanar, and they are the vertices of a symmetric right trapezoid $Q_{i,j}$ (cf. Figure~\ref{fig:paper}).
Note that $\bd P$ can be decomposed into the mutually nonoverlapping trapezoids $Q_{i,j}$, $(i,j) \in I$.
An elementary computation yields that the center of gravity of $Q_{i,j}$ is
\[
\frac{1}{3} \left( \frac{2 \tan^2 \alpha_i + \sin^2 \beta_j}{\tan^2 \alpha_i + \sin^2 \beta_j} \cos \beta_j f_j + \frac{\tan^2 \alpha_i + 2\sin^2 \beta_j}{\tan^2 \alpha_i + \sin^2 \beta_j} \frac{1}{\cos \alpha_i} v_i \right).
\]
The area of $Q_{i,j}$ is $\tan \alpha_i \sin \beta_j$. Thus, letting $A= \sum_{ (i,j) \in I} \tan \alpha_i \sin \beta_j$,
we have
\begin{equation}\label{eq:cm2}
\cm_2(P) = \frac{1}{3A} \sum_{ (i,j ) \in I} \tan \alpha_i \sin \beta_j \left( \frac{2 \tan^2 \alpha_i + \sin^2 \beta_j}{\tan^2 \alpha_i + \sin^2 \beta_j} \cos \beta_j f_j + \frac{\tan^2 \alpha_i + 2\sin^2 \beta_j}{\tan^2 \alpha_i + \sin^2 \beta_j} \frac{1}{\cos \alpha_i} v_i \right) .
\end{equation}

\begin{figure}[ht]
\begin{center}
\includegraphics[width=0.6\textwidth]{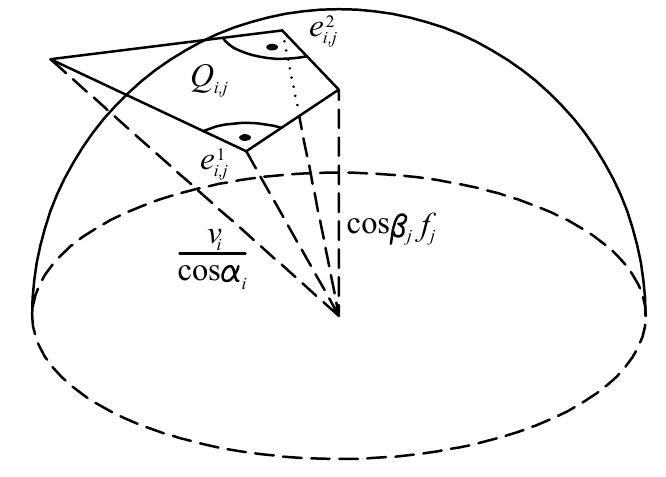}
\caption{The right trapezoid $Q_{i,j}$}
\label{fig:paper}
\end{center}
\end{figure}

Let us define the smooth vector field $h:D \to TD$ as
\begin{equation}\label{eq:function_faces}
h(p) = \sum_{(i,j) \in I} h_{i,j}(p),
\end{equation}
where
\[
h_{i,j}(p) = \frac{1}{\sinh d^v_i \cosh d^f_j} \left( \frac{2 \cosh^2 d^f_j + \sinh^2 d^v_i}{\cosh^2 d^f_j + \sinh^2 d^v_i} \tanh d^f_j f_j(p) + \frac{\cosh^2 d^f_j + 2 \sinh^2 d^v_i}{\cosh^2 d^f_j + \sinh^2 d^v_i} \coth d^v_i  v_i(p) \right).
\]
Here, for simplicity, we set $d^v_i = d^v_i(p)$ and $d^f_j=d^f_j(p)$.
The function $h$ is a smooth function on $D$ with positive coefficients. Furthermore, by Remark~\ref{rem:parallelism}, if $h(p) = o$ for some $p \in D$ and $T$ is a M\"obius transformation mapping $p$ into $o$, then $\cm_2(T(P))=o$. Similarly like in Subsection~\ref{subsec:cm1}, we show that the conditions of Lemma~\ref{lem:integralcurves} are satisfied for $h$.

To prove (ii) and (iii) we apply the same argument as in Subsection~\ref{subsec:cm1}.
To prove (iv), we follow the line of the same proof, and imagine the configuration in the half space model.
Let $q$ be the ideal point of $V_1, V_2, F_1$ and $F_2$. Then $D$ is bounded by the vertical planes $V_1, V_2, F_1$ and $F_2$ which form a rectangle based half-infinite cylinder. We adapt the notations from the previous subsection, and set $D_t = D \cap \{ z=t \}$ for all $ > 0$.
We denote the $z$-coordinate of $h_{i,j}(p)$ by $z_{i,j}(p)$, and show that their sum is positive if $t$ is sufficiently large.

By Remark~\ref{rem:halfplane} and an elementary computation, if $i \notin \{1,2\}$, then $z_{i,j}(p)$ uniformly tends to zero for all $p \in D_t$ as $t \to \infty$.
To examine the remaining cases, for $i=1,2$, let $x_i(p)$ denote the Euclidean distance of the point $p$ from $V_i$. Then $x_1(p) + x_2(p) = x$ is the Euclidean distance of $V_1$ and $V_2$. 
By Remark~\ref{rem:halfplane}, there is some constant $C_1 > 0$ independent of $p$, $t$, $i$ and $j$ such that for all $p \in D_t$, $j \geq 3$ and $i \in \{ 1,2 \}$,
we have $z_{i,j}(p) \geq - \frac{C_1}{x_i}$. Similarly, there is some constant $C_2 > 0$ independent of $p,t,i,j$ such that for all $p \in D_t$, $i,j \in \{1,2\}$,
we have $z_{i,j}(p) \geq \frac{C_2 t^2}{x_i}$. This implies that if $t$ is sufficiently large (and in particular, if $t > \sqrt{\frac{C_1 k}{C_2}}$, where $k$ is the maximal degree of a vertex of $P$), then the $z$-coordinate of $h(p)$ is positive for all $p \in D_t$.
From this, Theorem~\ref{thm:main} readily follows for $\cm_2(\cdot)$.


\subsection{Circumcenter of mass: $\ccm(\cdot)$}\label{subsec:ccm}

In this subsection we assume that $P$ is simplicial.

Similarly like in Subsection~\ref{subsec:cm2}, we denote by $I$ the set of edges of the vertex-face incidence graph of $P$, and
by $\V_j = \{ a_j, b_j, c_j\}$ the set of the indices of the vertices adjacent to the $j$th face of $P$.

Let the convex hull of the $j$h face of $P$ and $o$ be denoted by $S_j$.
To compute $\ccm(P)$, we need to compute the volume and the circumcenter of $S_j$, which we denote by $m_j$ and $p_j$.
To do this, in the next lemma for simplicity we omit the index $j$, and in addition denote $\tan \alpha_{x_j}$ by $t_x$ for $x \in \{a,b,c\}$.



\begin{lem}\label{lem:ccm}
The volume of $S_j$ is
\begin{equation}\label{eq:simplex_volume}
m_j = \frac{1}{3}\sqrt{t_a t_b t_c \left( t_a+t_b+t_c-t_a t_b t_c \right) } .
\end{equation}
The circumcenter of $S_j$ is
\begin{equation}\label{eq:cc1}
p_j = \sum_{s \in \{a,b,c\}} N_s v_s,
\end{equation}
where
\begin{equation}\label{eq:cc2}
N_a = \frac{(t_b+t_c)\left( (t_b+t_c) t_a^2 +(2t_b^2 t_c^2 + t_b^2 + t_c^2) t_a - t_b t_c (t_b +t_c))\right)}{4 t_at_bt_c \left( t_a+t_b+t_c-t_a t_b t_c \right)} ,
\end{equation}
and $N_b$ and $N_c$ are defined analogously.
\end{lem}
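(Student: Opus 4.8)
The plan is to set up coordinates adapted to the $j$th face and compute directly. First I would place the three vertices of the $j$th face at $P_s = \frac{v_s}{\cos\alpha_s}$ for $s \in \{a,b,c\}$, as given by Remark~\ref{rem:fv}, and recall that the incenter of the face is $\cos\beta_j f_j$ and that the face circle has spherical radius $\beta_j$; the incircle of the face, as a planar circle, has Euclidean radius $\sin\beta_j$ (since the face plane is at distance $\cos\beta_j$ from $o$). The key elementary fact I would exploit is that the tangent lengths from $P_s$ to the incircle equal $\tan\alpha_s =: t_s$, because the tangency point on an edge through the $s$th vertex is at Euclidean distance $\tan\alpha_s$ from that vertex (this is the same elementary computation used for edge lengths in Subsection~\ref{subsec:cm1}). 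Thus, working entirely in the plane of the face, the triangle $P_a P_b P_c$ is the triangle with incircle of radius $\sin\beta_j$ whose tangent lengths are $t_a, t_b, t_c$. A standard identity gives its inradius as $r = \sqrt{\frac{t_a t_b t_c}{t_a+t_b+t_c}}$, so the constraint $\sin^2\beta_j \cdot (t_a+t_b+t_c) = t_a t_b t_c$ holds, equivalently $\cos^2\beta_j = \frac{t_a+t_b+t_c - t_a t_b t_c}{t_a+t_b+t_c}$; this is the relation that will make the claimed volume formula come out clean.

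Next I would compute the area of the face triangle. With tangent lengths $t_a,t_b,t_c$ and semiperimeter $s_0 = t_a+t_b+t_c$, Heron's formula gives $\area = r\cdot s_0 = \sqrt{t_a t_b t_c (t_a+t_b+t_c)}$. The simplex $S_j = \conv(\{P_a,P_b,P_c\} \cup \{o\})$ has height equal to $\dist(o, \text{face plane}) = \cos\beta_j$, so
\[
m_j = \frac{1}{3} \cos\beta_j \cdot \area = \frac{1}{3}\sqrt{t_a t_b t_c (t_a+t_b+t_c)} \cdot \sqrt{\frac{t_a+t_b+t_c - t_a t_b t_c}{t_a+t_b+t_c}} = \frac{1}{3}\sqrt{t_a t_b t_c (t_a+t_b+t_c - t_a t_b t_c)},
\]
which is (\ref{eq:simplex_volume}). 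For the circumcenter $p_j$ of the tetrahedron $S_j$, I would use that it is equidistant from $o$ and from each $P_s$; since $|o|=0$ and $|P_s| = \frac{1}{\cos\alpha_s} = \sec\alpha_s$, the condition $|p_j|^2 = |p_j - P_s|^2$ gives the three linear equations $2\langle p_j, P_s\rangle = |P_s|^2 = \sec^2\alpha_s$, i.e. $2\langle p_j, \frac{v_s}{\cos\alpha_s}\rangle = \sec^2\alpha_s$, so $\langle p_j, v_s\rangle = \frac{1}{2}\sec\alpha_s$. Writing $p_j = \sum_{s} N_s v_s$ and using $\langle v_s, v_{s'}\rangle = \cos\theta_{ss'}$ where $\theta_{ss'}$ is the angular distance between the centers of the two vertex circles — which by the orthogonality structure of the Koebe polyhedron is determined by the $\alpha$'s (the planes $V_a, V_b, V_c$ all meet the plane $F_j$ at right angles, so their pairwise configuration is constrained) — I would solve the resulting $3\times 3$ linear system by Cramer's rule, and then simplify using the relation $\sin^2\beta_j(t_a+t_b+t_c) = t_a t_b t_c$ derived above to eliminate $\beta_j$. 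The determinant of the Gram matrix $(\langle v_s, v_{s'}\rangle)$ should, after simplification, be proportional to $t_a t_b t_c(t_a+t_b+t_c - t_a t_b t_c)$ divided by a product of $\cos^2\alpha_s$ factors, matching the denominator $4 t_a t_b t_c(t_a+t_b+t_c - t_a t_b t_c)$ in (\ref{eq:cc2}) once the $\frac12$ in $\langle p_j,v_s\rangle$ and the $\cos\alpha_s = (1+t_s^2)^{-1/2}$ substitutions are carried through.

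The main obstacle I anticipate is purely computational: expressing $\langle v_a, v_b\rangle$ in terms of $t_a,t_b,t_c$ (and $\beta_j$, then eliminating it), and then grinding Cramer's rule to the stated closed form (\ref{eq:cc2}) without error. One clean route is to avoid the Gram matrix of the $v_s$ altogether: work in the plane of the face, find the planar circumcenter $c_0$ of the triangle $P_a P_b P_c$ in barycentric-type coordinates (a classical formula in terms of the side lengths, here expressible via $t_a,t_b,t_c$), then lift — the tetrahedron circumcenter $p_j$ lies on the line through $c_0$ perpendicular to the face, at the height where it becomes equidistant from $o$; solving one quadratic (actually linear, since the two squared-distance conditions differ by a linear term) pins down $p_j$. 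Re-expanding $p_j = \sum N_s P_s = \sum (N_s \sec\alpha_s) v_s$ — wait, here one must be careful to match the paper's normalization $p_j = \sum N_s v_s$ rather than $\sum(\cdot)P_s$ — gives the coefficients, and a final check that the symmetric expression for $N_a$ reduces to (\ref{eq:cc2}) under $\cos^2\alpha_s = 1/(1+t_s^2)$ completes the proof. I would also double-check the formula against the degenerate case where the face triangle is equilateral and $o$ lies on the axis, where $p_j$ must lie on that axis, as a sanity check on signs and the symmetry between $N_a, N_b, N_c$.
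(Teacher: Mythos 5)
Your proposal is correct, and it splits naturally into two halves relative to the paper. For the circumcenter your route is essentially identical to the paper's: both reduce the problem to the three linear conditions $\langle p_j, v_s\rangle = \frac{1}{2\cos\alpha_s}$ (the paper phrases this as $p_j$ lying on the perpendicular bisector plane through $\frac{v_s}{2\cos\alpha_s}$ with normal $v_s$) and then solve the $3\times 3$ system with coefficient matrix $(\langle v_s,v_r\rangle)$ by Cramer's rule. The one ingredient you leave implicit and should state is the explicit value of the off-diagonal Gram entries, $\langle v_s,v_r\rangle=\cos(\alpha_s+\alpha_r)$: this does not come from the vertex--face orthogonality you invoke, but from the fact that the vertex circles of two adjacent vertices are externally tangent at the tangency point of the common edge, so their spherical centers lie at spherical distance $\alpha_s+\alpha_r$; without this the Gram matrix is not pinned down in terms of $t_a,t_b,t_c$. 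For the volume you take a genuinely different and more elementary route: the paper evaluates a Cayley--Menger determinant from the six edge lengths $\frac{1}{\cos\alpha_x}$ and $t_y+t_z$, and must separately argue $t_a+t_b+t_c-t_at_bt_c>0$ via the perimeter of a spherical triangle; you instead compute the face area by Heron's formula in tangent-length variables, $\area = \sqrt{t_at_bt_c(t_a+t_b+t_c)}$, identify the inradius $\sin\beta_j=\sqrt{t_at_bt_c/(t_a+t_b+t_c)}$, and multiply by the height $\cos\beta_j$. This buys you the positivity of $t_a+t_b+t_c-t_at_bt_c$ for free (it equals $(t_a+t_b+t_c)\cos^2\beta_j$ with $\beta_j<\frac{\pi}{2}$ by the standing assumption), at the small cost of first verifying that the tangent length from the $s$th vertex to the incircle is $t_s$, which you do correctly. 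Both your write-up and the paper leave the final Cramer-rule expansion to the reader, so the level of detail is comparable; your planar-circumcenter-and-lift alternative would also work but is not needed.
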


\begin{proof}
Note that the three edges of $S_j$ starting at $o$ are of length $\frac{1}{\cos \alpha_x}$ with $x \in \{a,b,c\}$.
Furthermore, the edge opposite of the one with length $\frac{1}{\cos \alpha_{x}}$ is $t_y+t_z$, where $\{x,y,z\} = \{a,b,c\}$.
Thus, the volume of $S_j$ can be computed from its edge lengths using a Cayley-Menger determinant.
It is worth noting that since the projection of $F$ onto $\S^2$ is a spherical triangle of edge lengths $\alpha_{a}+\alpha_{b}$, $\alpha_{a}+\alpha_c$ and $\alpha_{b}+\alpha_{c}$, and such a triangle is spherically convex, its perimeter is $\alpha_a+\alpha_b+\alpha_c < \pi$. From this an elementary computation yields that $t_i+t_j+t_k  - t_i t_j t_k > 0$, and the formula in (\ref{eq:simplex_volume}) is valid.
                                                                                                                   
We compute $p_j$. Since the vectors $v_a, v_b$ and $v_c$ are linearly independent, we may write this point in the form $p_j = \sum_{s \in \{ i,j,k\}} N_s v_s$ for some coefficients $N_a, N_b, N_c$.
We multiply both sides of this equation by $v_r$ with some $r \in \{a,b,c \}$.                            
Since all $v_i$s are unit vectors, we have that $\langle v_s, v_r\rangle = \cos (\alpha_s + \alpha_r)$ if $s \neq r$, and $\langle u_r,u_r \rangle = 1$.
On the other hand, for any value of $r$, $p_j$ is contained in the plane with normal vector $v_r$ passing through the point $\frac{u_r}{2 \cos \alpha_r}$.
Hence, it follows that $[N_a,N_b,N_c]^T$ is the solution of the system of linear equations with coefficient matrix
\[
\left[
\begin{array}{ccc}
1 & \cos(\alpha_a + \alpha_b) & \cos(\alpha_a+\alpha_c)\\
\cos(\alpha_a+\alpha_b) & 1 & \cos(\alpha_b+\alpha_c)\\
\cos(\alpha_a+\alpha_c) & \cos(\alpha_b+\alpha_c) & 1
\end{array}
\right]
\]
and with constants $\frac{1}{2 \cos \alpha_r}$, where $r=a,b,c$.
The determinant of the coefficient matrix is $36 (m_j)^2 (1+t_a^2)(1+t_b^2)(1+t_c^2) > 0$. Thus, this system has a unique solution, which can be computed by Cramer's rule, yielding the formula in (\ref{eq:cc2}).
\end{proof}

For $s=1,2,\ldots,n$, let us denote the value $\csch d^v_s(p) = \frac{1}{\sh d^v_s(p)}$ by $\tau_s(p)$.
Observe that Remark~\ref{rem:parallelism} implies that $\csch d^v_s(o) = \tan \alpha_s$.
For any $p \in D$, let us define the vector field
\begin{equation}\label{eq:ccm_function}
h(p) = \sum_{j=1}^m \sum_{s \in \V_j} B_s(p) v_s(p),
\end{equation}
where, using the notation $\V_j = \{ a,b,c \}$ and for brevity omitting the variable $p$, we have
\begin{equation}\label{eq:ccm_coeff}
B_{a}(p) = \frac{\tanh d_{a} \left( \tau_{b} + \tau_{c}\right) \left( \tau_a^2 \left( \tau_b + \tau_c \right) +
\tau_a \left(2 \tau_b^2 \tau_c^2 + \tau_b^2 + \tau_c^2 \right) - \tau_b \tau_c \left( \tau_b + \tau_c \right) \right)}{
\sqrt{\tau_a \tau_b \tau_c \left( \tau_a + \tau_b + \tau_c - \tau_a \tau_b \tau_c\right)}} .
\end{equation}
If $h(p) = o$ and $T$ is a M\"obius transformation that maps $p$ into $o$, then $\ccm(T(P)) = o$.
Thus, to prove the statement it is sufficient to prove that for some $p \in D$, $f(p) = o$.
To do this, we check that the conditions of Lemma~\ref{lem:integralcurves} are satisfied.

Let $Z$ denote the set of points of $D$ whose integral curve is closed.
Since for any value of $j$, $F_j$ is perpendicular to any $V_i$ with $(i,j) \in I$ and does not intersect any other $V_i$,
similarly like in Subsection~\ref{subsec:cm1}, it follows that if $q \in F_j$ for some plane $F_j$ associated to a face circle of $P$, then $q$
has a neighborhood disjoint from $Z$, and no integral curve ends at $q$.

Let $q \in V_i$ for some value of $i$.
It is an elementary computation to check that if $\alpha+\beta+\gamma = \pi$, and $0 < \alpha, \beta, \gamma < \frac{\pi}{2}$, then
$\tan \alpha + \tan \beta + \tan \gamma = \tan \alpha \tan \beta \tan \gamma$. This and Remark~\ref{rem:parallelism} implies
that if $p \to q$ and $i \in \{a,b,c \}$, then the denominator of $B_a(p)$ tends to zero.
Since the numerator tends to a positive number if $a=i$, and to zero if $i=b$ or $i=c$, it follows that
if $i \in \V_j$, then the length of $\sum_{s \in \V_j} B_s(p) v_s(p)$ tends to $\infty$, and its direction tends to that of $v_i(p)$.
Since $i \notin \V_j$ implies that $\sum_{s \in \V_j} B_s(p) v_s(p)$ can be continuously extended to $q$, it follows that
the angle of $h(p)$ and the external normal vector of $V_i$ at $q$ is `almost' zero in a suitable neighborhood of $q$.
This yields (iii).

We prove (iv) in a Poincar\'e half space model with $q$ being the `point at infinity'.
Without loss of generality, we may assume that $q$ is the ideal point of $V_1$, $V_2$, $F_1$ and $F_2$. Then these two pairs of hyperbolic planes are represented by two perpendicular pairs of vertical hyperbolic planes.
As before, let $D_t$ denote the set of points in $D$ with $z$-coordinates equal to $t$. We show that the $z$-coordinate of $h(p)$ is positive for any $p \in D_t$, if $t$ is sufficiently large. For any $j$ and any $i \in \V_j$, let us denote the $z$-coordinate of $B_i(p) v_i(p)$ by $z^j_i(p)$.

Let $p \in D_t$, and denote by $x_1$ and $x_2$ the Euclidean distance of $p$ from $V_1$ and $V_2$, respectively.
Consider some value of $j$. If $\V_j$ is disjoint from $\{1,2\}$, then Remark~\ref{rem:halfplane} and (\ref{eq:ccm_coeff}) shows that there is some $C_1 > 0$
independent of $p$ such that $|z^j_i(p)| \leq \frac{C_1}{t^2}$ if $t$ is sufficiently large.
Assume that $\V_j$ contains exactly one of $1,2$, say $1$.
Then, an elementary computation and Remark~\ref{rem:halfplane} yields the existence of some $C_2, C_3 > 0$ independent of $p$ such that $|z^F_1(p)| \leq \frac{C_2}{t^2}$, and for $1 \neq i \in \V_j$, $|z^j_i(p)| \leq  C_3 \frac{t^2}{x_1^2}$.

Finally, let $\V_j=\{1,2,i\}$. Note that in this case $j=1$ or $j=2$. Furthermore, since $P$ is simplicial, we have that the Euclidean radius of the hemisphere representing $V_i$ is $\frac{x_1+x_2}{2}$, and the Euclidean distance of the center of this hemisphere from the projection of $p$ onto the $\{z=0\}$ plane is
$\sqrt{\left( \frac{x_1-x_2}{2} \right)^2 + y_j^2}$, where $y_j$ is the Euclidean distance of $p$ from $F_j$ (cf. Figure~\ref{fig:ccm}).
An elementary computation yields that by this and Remark~\ref{rem:halfplane}, the denominator in (\ref{eq:ccm_coeff}) is $\frac{t^3 (x_1+x_2)y_j}{(t^2+y_j^2-x_1x_2)x_1x_2}$. Using this, we have $|z_1^j(p) | \leq \frac{2x_1^2}{x_2 y_j} t$, $|z_2^j(p) | \leq \frac{2x_2^2}{x_1 y_j} t$ and
$z_i^j(p) \geq \frac{x_1+x_2}{2x_1^2 x_2^2 y_j} t^3$ if $t$ is sufficiently large.
Using these estimates, we have $z_1^j(p) + z_2^j(p) + z_i^j(p) \geq \frac{C_4 t^3}{x_1^2 x_2^2}$ for some $C_4 > 0$ independently of $t$ and $p$.
Thus, there is some $C > 0$ such that if $t$ is sufficiently large, $\sum_{j=1}^n \sum_{i \in \V_j} z_i^j(p) \geq C t^3$, and, in particular, this expression is positive. The regions $D_t$ form a codimension 1 foliation of a neighborhood of $q$, and thus Theorem~\ref{thm:main} follows from Lemma~\ref{lem:integralcurves}.

\begin{figure}[ht]
\begin{center}
\includegraphics[width=0.45\textwidth]{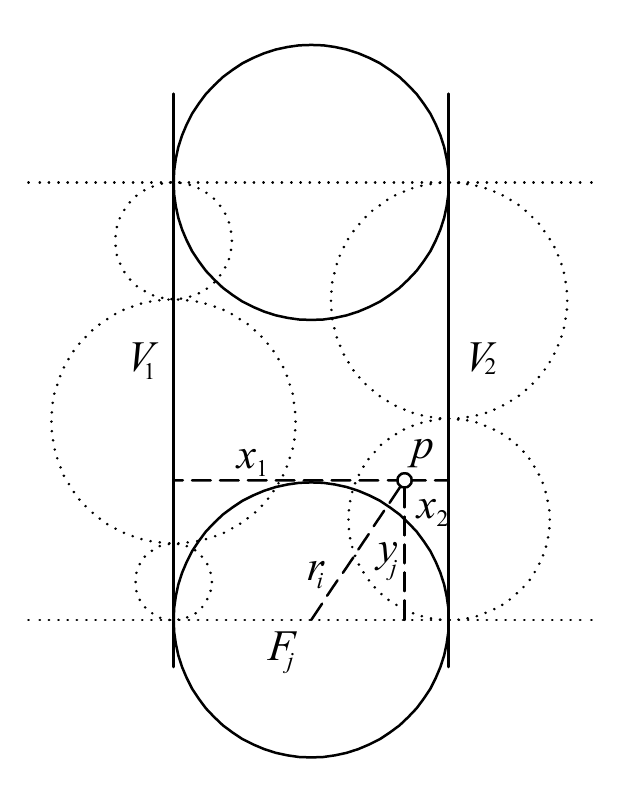}
\caption{The ideal points of hyperbolic planes associated to a simplicial polyhedron in the Euclidean plane $\{ z=0\}$. Continuous lines represent planes associated to vertex circles. Dotted lines represent planes associated to face circles.}
\label{fig:ccm}
\end{center}
\end{figure}

\subsection{Points of the Euler line}\label{subsec:euler}

Again, we assume that $P$ is simplicial.
Using the calculations in Subsection~\ref{subsec:cm2}, we have that the center of mass of $P$ is
\[
\cm_3(P) = \frac{1}{4A} \sum_{ (i,j ) \in I} \tan \alpha_i \sin \beta_j \cos \beta_j \left( \frac{2 \tan^2 \alpha_i + \sin^2 \beta_j}{\tan^2 \alpha_i + \sin^2 \beta_j} \cos \beta_j f_j + \frac{\tan^2 \alpha_i + 2\sin^2 \beta_j}{\tan^2 \alpha_i + \sin^2 \beta_j} \frac{1}{\cos \alpha_i} v_i \right),
\]
where $A=\sum_{ (i,j ) \in I} \tan \alpha_i \sin \beta_j \cos \beta_j$.

By Remark~\ref{rem:parallelism}, we define the smooth vector field $h_{cm} : D \to TD$ as
\begin{equation}\label{eq:cm}
h_{cm} (p) = \sum_{(i,j) \in I} h_{i,j}(p),
\end{equation}
where
\[
h_{i,j}(p) = \frac{\sinh d^f_j}{\sinh d^v_i \cosh^2 d^f_j} \left( \frac{2 \cosh^2 d^f_j + \sinh^2 d^v_i}{\cosh^2 d^f_j + \sinh^2 d^v_i} \tanh d^f_j f_j(p) + \frac{\cosh^2 d^f_j + 2 \sinh^2 d^v_i}{\cosh^2 d^f_j + \sinh^2 d^v_i} \coth d^v_i  v_i(p) \right).
\]
Furthermore, for any $\lambda \in (0,1)$, we set
$h_{\lambda}(p) = \lambda h_{cm}(p) + (1-\lambda) h_{ccm}(p)$,
where $h_{ccm} : D \to TD$ is the vector field defined in (\ref{eq:ccm_function}).
We observe that if there is some $p \in D$ such that $h_{\lambda}(p) = o$, and $T$ is a M\"obius transformation moving $p$ to $o$, then $o = \lambda \cm_3(T(P))+(1-\lambda)\ccm(T(P))$.

We show that the conditions of Lemma~\ref{lem:integralcurves} are satisfied for $h_{\lambda}$.
Note that since $\lambda \in (0,1)$, all coefficients in the definition of $h_{\lambda}$ are positive.
To check (i), (ii) and (iii), we may apply an argument similarly like before.
To prove (iv), again we represent the configuration in the half space model.
Let $D_t$ be the intersection of $D$ with the horosphere $\{ z=t \}$, and $z_{cm}(p)$ and $z_{\lambda}(p)$ denote the $z$-coordinate of $h_{cm}(p)$  and $h_{\lambda}(p)$, respectively.
Then an elementary computation yields by Remark~\ref{rem:halfplane} that there is some $\bar{C} > 0$ such that $|z_{cm}(p)| \leq \bar{C}$ for all $p \in D_t$, if $t$ is sufficiently large.
Thus, by the estimates in Subsection~\ref{subsec:ccm} and since $\lambda < 1$ it follows that if $t$ is sufficiently large, then $z_{\lambda}(p) > 0$ for all $p \in D_t$. Consequently, Lemma~\ref{lem:integralcurves} can be applied, and Theorem~\ref{thm:main} holds for the considered point of the Euler line.

\section{Proof of Theorem~\ref{thm:circles}}\label{sec:circles}

To prove Theorem~\ref{thm:circles}, we follow the line of the proof of Theorem~\ref{thm:main}. To do this, we need a lemma for polyhedral regions in Euclidean space.

\begin{lem}\label{lem:polytopes}
Let $S_1, \ldots, S_k$ be closed half spaces in $\Re^d$, with outer normal vectors $u_1,\ldots,u_k$. Then there are unit normal vectors $v_1, \ldots, v_m$ such that $\langle u_i, v_j \rangle \leq 0$, for all $1 \leq i \leq l$ and $1 \leq j \leq m$, and for arbitrary closed half spaces $S'_1, \ldots, S'_m$ with outer unit normal vectors $v_1, \ldots, v_m$, respectively, the set $Q=\left( \bigcap_{i=1}^k S_i \right) \cap \left( \bigcap_{j=1}^m S'_j \right)$ is bounded.
\end{lem}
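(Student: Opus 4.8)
The plan is to reduce the problem to a statement about the recession cone of the polyhedron $\bigcap_{i=1}^k S_i$. Write $u_1,\dots,u_k$ for the given outer unit normals, so that $\bigcap_{i=1}^k S_i$ has recession cone $K=\{x: \langle u_i,x\rangle\le 0 \text{ for all } i\}$. The set $Q$ in the statement will be bounded precisely when adjoining the half spaces $S'_j$ kills this recession cone, i.e. when $K\cap\{x:\langle v_j,x\rangle\le 0 \text{ for all } j\}=\{0\}$; indeed a closed convex polyhedron is bounded iff its recession cone is trivial, and the recession cone of an intersection of half spaces through shifted copies is the intersection of the homogeneous cones, independently of the translates $S'_j$ (which is why the conclusion can be phrased for \emph{arbitrary} half spaces with those normals). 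So it suffices to produce finitely many unit vectors $v_1,\dots,v_m$ with $\langle u_i,v_j\rangle\le 0$ for all $i,j$ and with $K\cap\bigcap_j\{ \langle v_j,\cdot\rangle\le 0\}=\{0\}$.

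First I would dispose of the trivial case $K=\{0\}$: then $\bigcap_i S_i$ is already bounded, and one may take $m=1$ with $v_1$ any unit vector (the bound $\langle u_i,v_1\rangle\le 0$ is vacuous since there is no constraint forcing it — one simply notes $Q\subseteq\bigcap_i S_i$ is bounded regardless; alternatively take $m$ large enough and pick the $v_j$ to span, but the cleanest is to observe that adding \emph{any} half spaces keeps a bounded set bounded, so no condition on the $v_j$ is needed). When $K\ne\{0\}$, consider the polar cone $K^\circ=\{y:\langle y,x\rangle\le 0 \text{ for all } x\in K\}=\operatorname{cone}\{u_1,\dots,u_k\}$, the convex cone generated by the $u_i$. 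The condition $\langle u_i,v_j\rangle\le 0$ for all $i$ says exactly $v_j\in K^{\circ\circ}=(K^\circ)^\circ$, and since $K$ is a closed convex cone, $K^{\circ\circ}=K$. Hence the admissible choices of $v_j$ are precisely the unit vectors in $K$ itself. The remaining requirement is that $K\cap\bigcap_j\{\langle v_j,\cdot\rangle\le 0\}=\{0\}$ with all $v_j\in K$; since for $x,v_j\in K$ we need not have $\langle v_j,x\rangle\le 0$ automatically, this is a genuine constraint, and I would satisfy it by choosing the $v_j$ to be the extreme rays (or a spanning set of unit vectors) of $K$, so that $x\in K$ with $\langle v_j,x\rangle\le 0$ for every generator $v_j$ forces $\langle x,x\rangle = \sum(\text{nonneg. combination})\le 0$ after expressing $x=\sum\mu_j v_j$ with $\mu_j\ge 0$; then $0\ge\langle x,x\rangle=\sum_j\mu_j\langle v_j,x\rangle\ge 0$ gives $x=0$.

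The main obstacle is the last step: it requires that the finitely generated cone $K$ admits a finite set of unit generators $v_1,\dots,v_m\in K$ such that $\langle v_j,x\rangle\le 0$ for all $j$ forces $x\notin K\setminus\{0\}$. This is where one uses that $K$ is \emph{polyhedral} (finitely generated, by the Minkowski–Weyl theorem applied to $K=\{x:\langle u_i,x\rangle\le 0\}$): take $v_1,\dots,v_m$ to be unit vectors along the extreme rays of $K$; any $x\in K$ is a nonnegative combination $x=\sum_j\mu_j v_j$, so $\langle v_j,x\rangle\le 0$ for all $j$ yields $0=\langle x,x\rangle$ as above, hence $x=0$, and simultaneously each $v_j\in K=K^{\circ\circ}$ gives $\langle u_i,v_j\rangle\le 0$ for all $i$. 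Finally, for arbitrary translates $S'_j$ of the half spaces with these normals, $Q=\bigl(\bigcap_i S_i\bigr)\cap\bigl(\bigcap_j S'_j\bigr)$ has recession cone contained in $K\cap\bigcap_j\{\langle v_j,\cdot\rangle\le 0\}=\{0\}$, so $Q$ is a closed convex set with trivial recession cone, hence bounded, completing the proof.
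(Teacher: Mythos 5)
Your proposal is correct, and it reaches the same reduction as the paper: both arguments boil down to producing finitely many unit vectors $v_1,\ldots,v_m$ lying in the cone $K=\{x\in\Re^d:\langle u_i,x\rangle\le 0\ \text{for all }i\}$ such that no nonzero $x\in K$ satisfies $\langle v_j,x\rangle\le 0$ for all $j$ (equivalently, the open hemispheres centered at the $u_i$ and $v_j$ cover $\S^{d-1}$), after which boundedness of $Q$ for arbitrary translates follows from triviality of the recession cone. Where you differ is the mechanism for that key selection step. The paper notes that $Z=K\cap\S^{d-1}$ is compact, covers it by the tautological family of open hemispheres $\{G(v):v\in Z\}$ (each $v$ covers itself since $\langle v,v\rangle=1>0$), and extracts a finite subcover whose centers serve as the $v_j$. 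You instead invoke the Minkowski--Weyl theorem to write $K=\operatorname{cone}\{v_1,\ldots,v_m\}$ and then use positive definiteness of the inner product: if $x=\sum_j\mu_j v_j$ with $\mu_j\ge 0$ and $\langle v_j,x\rangle\le 0$ for all $j$, then $\|x\|^2=\sum_j\mu_j\langle v_j,x\rangle\le 0$, so $x=0$. Both are valid; the paper's compactness argument is marginally more elementary and would apply to an arbitrary closed convex cone $K$ (even one cut out by infinitely many $u_i$), while yours exploits polyhedrality to give a canonical, explicit choice of the $v_j$ with no covering argument. One small slip worth fixing: in the degenerate case $K=\{0\}$ you propose $m=1$ with $v_1$ arbitrary and call the constraint $\langle u_i,v_1\rangle\le 0$ ``vacuous''; it is not vacuous, and when $K=\{0\}$ no unit vector satisfies it. The correct resolution is to take $m=0$ (no additional half spaces), which is exactly the paper's ``there is nothing to prove''; this does not affect the substance of your argument, which in any case only matters when $K\ne\{0\}$.
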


\begin{proof}
First, observe that the property that $Q$ is bounded is equivalent to the property that there is no unit vector $v \in \S^{d-1}$ such that  $\langle v, u_i \rangle \leq 0$ and $\langle v, v_j \rangle \leq 0$ holds for all $1 \leq i \leq k$ and $1 \leq j \leq m$.
In other words, $Q$ is bounded if, and only if the open hemispheres of $\S^{d-1}$, centered at the $u_i$s and the $v_j$s, cover $\S^{d-1}$.
If $\bigcap_{i=1}^k S_i$ is bounded, there is nothing to prove, and thus, we may consider the set $Z$ of vectors in $\S^{d-1}$ not covered by any open hemisphere centered at some $u_i$. Note that since $Z$ is the intersection of finitely many closed hemispheres, it is compact. Let $G(v)$ denote the open hemisphere centered at $v$. Then the family $\{ G(v) : v \in Z\}$ is an open cover of 
$S$, and thus it has a finite subcover $\{ G(v_j) : i=1,\ldots, m \}$. By its construction, the vectors $v_1,\ldots, v_m$ satisfy the required conditions.
\end{proof}

Now we prove Theorem~\ref{thm:circles}, and for any $i=1,2,\ldots,n$, we let $\rho_i$ denote the spherical radius of $C_i$.
We imagine $\S^d$ as the set of ideal points of the Poincar\'e ball model of $\HH^{d+1}$. Then each spherical cap is associated to a closed hyperbolic half space.
We denote the half space associated to $C_i$ by $\bar{H}_i$, and the hyperplane bounding $\bar{H}_i$ by $H_i$.
Let $D = \HH^{d+1} \setminus \left( \bigcup_{i=1}^n \bar{H}_i \right)$, and note that as $\rho_i < \frac{\pi}{2}$ for all indices, $D$ is an open, convex set in $\HH^{d+1}$ containing the origin $o$.

For any $p \in D$, let us define the function $f_i(d_i) = w_i(\arccos \tanh d_i)$. Then $f_i : (0,\infty) \to (0,\infty)$ is a positive smooth function on its domain satisfying $\lim_{d \to 0+0} f_i(d) = \infty$. Let $v_i(p) \in T_p \HH^{d+1}$ denote the unit tangent vector of the geodesic half line starting at $p$ and perpendicular to $H_i$, and let $d_i(p)$ denote the hyperbolic distance of $p$ from $H_i$. Finally, let the smooth vector field $f : D \to TD$ be defined as
\[
f(p) = \sum_{i=1}^n f_i(d_i(p)) v_i(p).
\]
By (\ref{eq:angledistance}) and (\ref{eq:gradient}), if $T$ is a M\"obius transformation mapping $p$ into $o$, then $f(p) = \sum_{i=1^n} w_i(\rho_T(C_i)) c_T(C_i)$. Since hyperbolic isometries act transitively on $\HH^{d+1}$, it is sufficient to show that $f(p) = o_p$ for some $p \in D$.

We prove it by contradiction, and assume that $f(p) \neq o_p$ for any $p \in D$. Consider the integral curves of this vector field. Then, by the Picard-Lindel\"of Theorem, they are either closed, or start and terminate at boundary points of $D$.
On the other hand, since $f_i$ is smooth for all values of $i$, $f_i$ has an antiderivative function $F_i$ on its domain. It is easy to check that
$\grad (-\sum_{i=1}^n F_i(d_i(p))) = f(p)$, implying that $f$ is a gradient field, and thus it has no closed integral curves.

Our main tool is the next lemma. To state it we define a \emph{neighborhood} of a point $q$ in the boundary of $D$ as the intersection of $D$ with a neighborhood of $q$ in $\Re^{d+1}$ induced by the Euclidean topology (cf. Section~\ref{sec:integralcurves}). Recall from Theorem~\ref{thm:circles}  that if $q \in \S^d$, then $I(q)$ denotes the set of indices of the spherical caps $C_i$ that contain $q$ in their boundaries.

\begin{lem}\label{lem:intcurves_ddim}
Let $q$ be a boundary point of $D$, and if $q \notin \S^d$, then let $I(q)$ denote the set of indices such that $q \in H_i$.
\begin{itemize}
\item[(a)] If $q \notin S^d$, then $q$ has a neighborhood $V$ such that any integral curve intersecting $U$ terminates at a point of $H_j$ for some $j \in I(q)$.
\item[(b)] If $q \in \S^d$, then there is no integral curve terminating at $q$.
\end{itemize}
\end{lem}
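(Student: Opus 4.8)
The plan is to analyze the two cases separately, using the geometry of the Poincar\'e ball model and the key structural fact that each $f_i$ blows up near $H_i$ while staying bounded away from it.

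\medskip

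\textbf{Case (a): $q \notin \S^d$.} First I would work in the Poincar\'e half-space model with $q$ as a finite boundary point (not the point at infinity), so that a small Euclidean half-ball $V$ around $q$ meets $D$ in a region bounded by the finitely many hyperplanes $H_i$ with $i \in I(q)$ (those passing through $q$) together with possibly pieces of $\S^d$; all other $H_i$ stay a positive distance away on $V$. On such a neighborhood, the terms $f_i(d_i(p)) v_i(p)$ with $i \notin I(q)$ extend continuously and have bounded magnitude, while for $i \in I(q)$ we have $d_i(p) \to 0$ as $p$ approaches the corresponding face, so $f_i(d_i(p)) \to \infty$. Since each $v_i(p)$ is the unit inward normal direction toward $H_i$, the resolvent of $f$ restricted to $V$ points, in a neighborhood of $q$, essentially into the cone spanned by the $v_i(p)$, $i \in I(q)$; in particular $f$ has no zero on $V$ (shrinking $V$ if needed) and the flow is transverse to each hypersphere $\{d_i = \tau\}$ for small $\tau$, forcing every integral curve that enters $V$ to decrease $\min_{i \in I(q)} d_i(p)$ monotonically and hence to terminate on some $H_j$, $j \in I(q)$. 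Making the ``points into the cone'' claim quantitative --- i.e.\ controlling the bounded $i \notin I(q)$ contributions against the blow-up and against the mutual angles between the $v_i(p)$, $i \in I(q)$ --- is the main technical point, and is where the hypothesis that $f$ is a gradient field is genuinely useful: one checks that $-\sum_i F_i(d_i(p)) \to +\infty$ along any curve approaching a point of $H_j \cap V$, so the flow of $\grad$ of this function must eventually leave every compact subset of $V \cap D$ through $\bigcup_{j \in I(q)} H_j$.

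\medskip

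\textbf{Case (b): $q \in \S^d$.} Here $q$ is an ideal point, and I would again pass to the half-space model but now placing $q$ at the point at infinity. If $q$ is not an ideal point of any $H_i$, then a horoball neighborhood of $q$ (a region $\{z > t\}$ for large $t$) is entirely contained in $D$ and disjoint from all $\bar H_i$, so on it every $d_i(p)$ is large; since $\lim_{d \to 0} f_i(d) = \infty$ says nothing about large $d$, I instead use condition~(\ref{eq:circle_condition}) and Remark~\ref{rem:halfplane}: each $f_i(d_i(p)) v_i(p)$ has nonnegative $z$-component that is uniformly small as $t \to \infty$, except that the terms with $i$ such that $q$ is \emph{close} to being an ideal point of $H_i$ could be large --- but for $q$ strictly outside all the caps those terms are controlled, giving $f$ a strictly positive (hence nonzero) $z$-component on $\{z > t\}$, so no integral curve can terminate at $q$. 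If $q$ lies in the boundary of some caps, i.e.\ $q \in \partial C_i$ exactly for $i \in I(q)$, then $H_i$ for $i \in I(q)$ becomes a \emph{vertical} plane through $\infty$ in the half-space picture while the remaining $H_i$ are spherical planes at a positive distance; the relevant neighborhood $\{z > t\} \cap D$ is a half-infinite prism, and by Remark~\ref{rem:halfplane} the $z$-coordinate of $f_i(d_i(p))v_i(p)$ tends, as $t \to \infty$, to $\cos\rho_i \cdot \lim_{s \to \frac{\pi}{2}-} w_i(s)\cdot(\text{something})$ for $i \in I(q)$ --- in effect to a bounded quantity governed by $\lim_{t\to\frac\pi2-}w_i(t)\cos t$ --- while for $i \notin I(q)$ the distance $d_i(p)$ stays bounded and the positive $z$-contribution is at least of order $\lim_{s \to 0+} w_i(s)$; inequality~(\ref{eq:circle_condition}) is precisely what makes the net $z$-coordinate of $f(p)$ positive for $t$ large, so again no integral curve terminates at $q$.

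\medskip

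\textbf{Expected main obstacle.} The delicate step is case~(b) when $q \in \bigcap_{i \in I(q)} \partial C_i$: one must carry out the asymptotic expansion of each $z_i(p)$ as $z = t \to \infty$ on the prism $\{z > t\} \cap D$ uniformly in the transverse variables, identify the boundary behaviour of $f_i$ near $d_i = 0$ with $\lim_{s\to\frac\pi2-}w_i(s)\cos s$ via $d_i = \arcsinh(\ldots)$ and $\cos\rho = \tanh d$, and match these against the ``inner'' terms $i \notin I(q)$, whose contribution is bounded below using $\lim_{s \to 0+} w_i(s)$ and the fact that those $d_i$ stay in a compact subinterval of $(0,\infty)$. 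Assembling these estimates so that (\ref{eq:circle_condition}) literally gives positivity of $\sum_i z_i(p)$ --- rather than something weaker --- is the crux, and mirrors the half-space computations already done for $\cm_1$, $\cm_2$ and $\ccm$ in Section~\ref{sec:main}.
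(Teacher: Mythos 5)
Your part (a) and the asymptotic computation at the heart of part (b) follow the paper's line, but part (b) as written has a genuine gap. In the half-space model with $q$ at infinity, a neighborhood of $q$ (in the Euclidean topology of the ball model) is the complement of a \emph{bounded} set, not a horoball $\{z>t\}$; and the slice $D_t=\{z=t\}\cap D$ is in general unbounded in the horizontal directions, since the polyhedral set $D^*$ of ideal points cut out by the vertical walls $H_i$, $i\in I(q)$, is unbounded whenever $|I(q)|$ is small (e.g.\ $|I(q)|\le 1$, and in particular when $I(q)=\emptyset$). On an unbounded $D_t$ your uniform estimates fail: a point of $D_t$ far from the vertical walls has $d_i(p)$ \emph{large} for $i\in I(q)$ and possibly small for some spherical $H_i$, so the limit you compute does not bound $\sup_{p\in D_t}z(p)$, and moreover an integral curve could approach $q$ "along the floor" without ever entering $\{z>t\}$. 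This is exactly why the paper first invokes Lemma~\ref{lem:polytopes} to adjoin finitely many auxiliary vertical half-spaces, forming $D'\subset D$ with bounded horizontal cross-section into which every $v_i(p)$ points strictly inward along the new walls; only then is the sign of $z(p)$ on $D_t=X_t\cap D'$ uniformly controlled, and the region $\{z>t'\}\cap D'$ together with the inward-pointing property yields a basis of neighborhoods of $q$ that no integral curve can enter. Without this step your argument does not rule out termination at $q$.

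Two smaller points. First, your signs in (b) are reversed: since $v_i(p)$ points \emph{toward} $H_i$, the spherical walls contribute $z$-components tending to $-\lim_{s\to 0^+}w_i(s)$ and the vertical walls contribute $+\lim_{s\to\pi/2^-}w_i(s)\cos s$ (via Remark~\ref{rem:halfplane}), so inequality (\ref{eq:circle_condition}) makes the net $z$-coordinate \emph{negative}; this is what you need, because a uniformly \emph{positive} $z$-component would push curves toward $q=\infty$ rather than away from it, and would not yield the conclusion. Second, in (a) your appeal to the gradient structure via ``$-\sum_iF_i(d_i(p))\to+\infty$'' is unjustified: $f_i(d)\to\infty$ as $d\to 0^+$ does not force its antiderivative to blow up (consider $f_i(d)=d^{-1/2}$). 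The paper does not need this; the domination of the bounded terms by the blowing-up coefficients $f_i(d_i(p))$, $i\in I(q)$, which is the first half of your own argument, already suffices.
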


\begin{proof}
First, we prove (a) for the case that $I(q) = \{ i \}$ is a singleton. Let $v$ be the external unit normal vector of $\bd D$ at $q$.
For any $p \in D$, if $p \to q$, then $f_i( d_i(p)) \to \infty$, and $v_i(p)$ tends to a vector of unit hyperbolic length, perpendicular to $H_i$ at $q$ and pointing outward.
On the other hand, $\sum_{j \neq i} f_j( d_j(p)) v_j(p)$ is continuous at $q$ and hence it tends to a vector of fixed hyperbolic length.
Thus, for every $\varepsilon > 0$ there is a neighborhood $U$ of $q$ such that the angle between $v$ and $f(p)$ is at most $\varepsilon$, for any $p \in V$. This implies (a) in this case. If $I(q)= \{ j_1, \ldots, j_k\}$ is not a singleton and the inner unit normal vectors of $H_{j_1}, \ldots, H_{j_k}$ are denoted by $v_{j_1}, \ldots, v_{j_k}$, respectively, then a similar argument shows that if $p$ is `close to $q$', then $f(p)$ is `close' to the conic hull of these vectors.

Now we prove (b). Our method is to show that $q$ has a basis of closed neighborhoods with the property that no integral curve enters any of them, which clearly implies (b). For computational reasons we imagine the configuration in the Poincar\'e half space model, with $q$ as the `point at infinity'. The region $D$ in this model is the intersection of finitely many open hyperbolic half spaces with vertical and spherical bounding hyperplanes, where $H_i$ is vertical if, and only if $i \in I(q)$ (cf. Figure~\ref{fig:Thm4}).

\begin{figure}[ht]
\begin{center}
\includegraphics[width=0.9\textwidth]{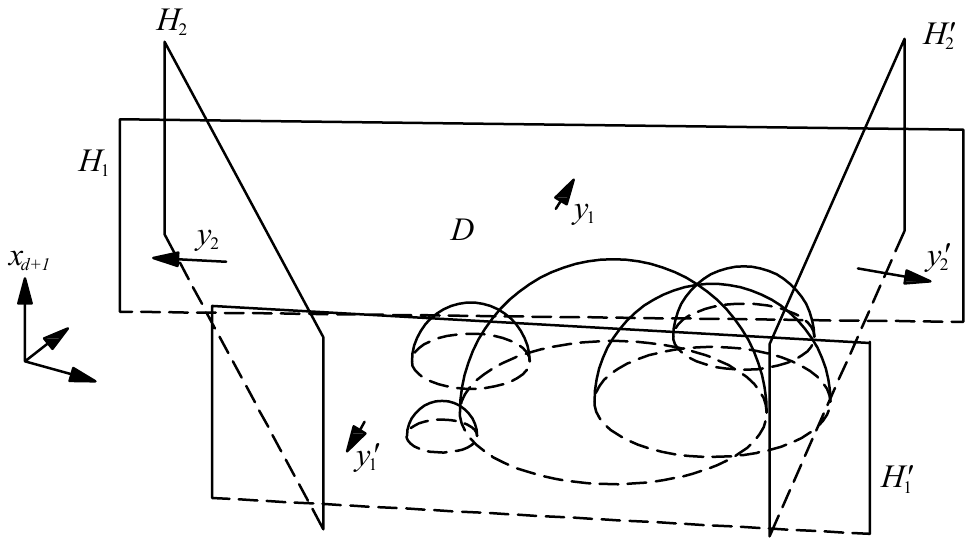}
\caption{The configuration in the Poincar\'e half space model}
\label{fig:Thm4}
\end{center}
\end{figure}

Consider a neighborhood $U$ of $q$. Then $U$ is the complement of a set which is bounded in $\Re^{d+1}$. Thus, without loss of generality, we may assume that $U$ is disjoint from all spherical $H_i$s, and it is bounded by a spherical hyperbolic hyperplane $H$.
For any $i \in I(q)$, let $y_i \in S$ be the outer unit normal vector of $H_i$ in $\Re^{d+1}$, where we set $S= \S^{d} \cap \{ x_{d+1} \}$.

Note that as $q$ is an ideal point of $D$, $D$ is not bounded in this model. Let $D^*$ denote the set of ideal points of $D$ on the Euclidean hyperplane $\{ x_{d+1} = 0\}$ (cf. Section~\ref{sec:prelim}). This set is the intersection of the closed half spaces $\bar{H}_i$, $i \in I(q)$ in the Euclidean $d$-space $\{ x_{d+1} = 0\}$ (for the definition of $H^*_i$, see Subsection~\ref{subsec:tools}).
Thus, if $D^*$ is not bounded, Lemma~\ref{lem:polytopes} implies that there are some closed vertical half spaces in $\HH^{d+1}$ whose intersection contains $H$, and whose outer unit normal vectors $y'_1, y'_2, \ldots, y'_m$ satisfy $\langle y'_j, y_i\rangle < 0$ for any $y_i$ and $y'_j$.
Let the intersection of these half spaces with $D$ be $D'$, and their bounding hyperbolic hyperplanes be $H'_1, H'_2, \ldots, H'_m$, where $y'_j$ is the outer unit normal vector of $H'_j$ for all values of $j$.

Let $p$ be a boundary point of $D'$ in $\HH^{d+1}$. Then $p \in H'_j \cap D'$ for some $j$s.
Observe that if $i \in I(q)$, then the geodesic line through $p$ and perpendicular to $H_i$, which in the model is a circle arc perpendicular to the hyperplane $\{ x_{d+1} \}$, is contained in the vertical plane through $p$ and perpendicular to $H_i$.
Thus, $v_i(p)$ points strictly inward into $D'$ at every boundary point of $D'$. A similar argument shows the same statement for any $i \notin I(q)$ as well.
As a result, we have that the integral curve through any point $p \in \bd D'$ enters $D'$ at $p$.

Let $X_t$ denote the set $\{ x_{d+1} = t \}$ for any $t > 0$, and note that this is a horosphere in $\HH^{d+1}$ with $q$ as its unique ideal point.
Set $D_t = X_t \cap D'$.
We show that if $t$ is sufficiently large, then $f(p)$ has a negative $x_{d+1}$-coordinate. We denote this coordinate by $z(p)$.

Let $p \in D_t$. It follows from Remark~\ref{rem:halfplane} and an elementary computation that if $i \in I(q)$, then the $x_{d+1}$-coordinate of
$v_i(p)$ is $\tanh d_i(p)$, and if $i \notin I(q)$, then it tends to $-1$ as $d_i(p) \to \infty$.
On the other hand, for any $\varepsilon , K > 0$ there is some value $t_0$ such that if $t > t_0$, then
$d_i(p) < \varepsilon$ for all $i \in I(q)$, and $d_i(p) > K$ for all $i \notin I(q)$ and for all $p \in D_t$.
This implies that
\[
\lim_{t \to \infty} \sup_{p \in D_t} z(p) = \lim_{d \to 0+0} \sum_{i \in I(q)} f_i(d) \tanh d - \lim_{d \to \infty} \sum_{i \notin I(q)} f_i(d).
\]
By the condition (\ref{eq:circle_condition}) and the relation (\ref{eq:angledistance}), we have that this quantity is negative, implying that
$z(p)$ is negative for all $p \in D_t$ if $t$ is sufficiently large.
Let $t'$ be chosen to satisfy this property. Without loss of generality, we may also assume that $X_{t'}$ does not intersect the hyperplane $H$.
Let $\bar{V}$ denote the set of points in $D'$ with $x_{d+1}$-coordinates less than $t'$, and let $V = \HH^{d+1} \setminus \bar{V}$.
Then $V$ is a neighborhood of $q$ in $\HH^{d+1}$, contained in $U$, and $V$ has the property that the integral curve through any boundary point $p$ of $V$
leaves $V$ at $p$. This proves (b).
\end{proof}

Now we finish the proof of Theorem~\ref{thm:circles}.
By the conditions in the formulation of the theorem, the set $\bigcup_{i=1}^n H_i \subset \HH^{d+1}$ is disconnected.
Let the components of this set be $X_1, X_2, \ldots, X_r$. By Lemma~\ref{lem:intcurves_ddim}, the integral curve of every point $p \in D$ terminates at some point of these sets. Let $Y_j$ denote the points of $D$ whose integral curve ends at a point of $X_j$. By Lemma~\ref{lem:intcurves_ddim}, no $Y_j$ is empty,
and it also implies that $Y_j$ is open in $D$ for all $j$s. Thus, $D$ is the disjoint union of the $r$ open sets $Y_1, Y_2, \ldots, Y_r$, where $r > 1$. On the other hand, $D$ is an open convex set, and thus, it is connected; a contradiction.

\section{Concluding remarks and open questions}\label{sec:remarks}

To illustrate why the problem of centering Koebe polyhedra via M\"obius transformations is different from the problem of centering density functions on the sphere, we prove Remark~\ref{rem:notdensity}.

\begin{rem}\label{rem:notdensity}
Let $g(\cdot) \in \{ \cc(\cdot), \cm_0(\cdot), \cm_1(\cdot), \cm_2(\cdot), \cm_3(\cdot)\}$ and let $P$ be a Koebe polyhedron. Then there is a M\"obius transformation $T : \S^2 \to \S^2$ such that $g(T(P)) \notin \BB^3$. Furthermore, if $P$ is simplicial, the same statement holds for $g(\cdot) = \ccm(\cdot)$.
\end{rem}

\begin{proof}
We use the notations introduced in Section~\ref{sec:prelim} and without loss of generality, we assume that the radius of every vertex and face circle of $P$ is less than $\frac{\pi}{2}$. Consider a closed hyperbolic half space $\bar{V}_i$ associated to a vertex circle $C_i$ of $P$. Let $q \in \S^2$ be an ideal point of $\bar{V}_i$ with the property that $q$ lies in the exterior of any vertex circle of $P$. Let the spherical distance of $q$ from the circle $C_i$ be $0 < \alpha \leq \frac{\pi}{2}$. Then the spherical radius of any vertex circle of $P$ is less than $\frac{\pi-\alpha}{2}$. Let $L$ be the hyperbolic line perpendicular to $V_i$ and with ideal point $q$. Consider a M\"obius transformation $T$ defined by a hyperbolic translation $T$ along $L$. Note that $T(q)=q$, and $T(V_i)$ is a hyperbolic plane perpendicular to $L$. Clearly, using a suitable translation, $T(V_i) \subset \inter \bar{V_i}$, and $T(V_i)$ is arbitrarily close to $o$. 
Here, the first property implies that, apart from $T(C_i)$, the radius of every vertex circle of $T(P)$ is less than $\frac{\pi-\alpha}{2} < \frac{\pi}{2}$, and hence,
by Remark~\ref{rem:fv} for any $j \neq i$ the distance of $j$th vertex of $T(P)$ from $o$ is less than $\frac{1}{\cos \frac{\pi-\alpha}{2}}$. On the other hand, the second property implies that the distance of the $j$th vertex of $T(P)$ from $o$ is arbitrarily large. From this, it readily follows that
$\cc(T(P)), \cm_0(T(P)), \cm_1(T(P)), \cm_2(T(P)), \cm_3(T(P)) \notin \BB^3$, and if $P$ is simplicial, then $\ccm(T(P)) \notin \BB^3$.
\end{proof}

We note that a similar construction can be given for spherical cap systems on $\S^d$ satisfying the conditions in Theorem~\ref{thm:circles}.

\begin{rem}
Using the idea of the proof in Subsection~\ref{subsec:euler}, it is possible to prove the following, stronger statement:
Let $P$ be a Koebe polyhedron, and let $g(\cdot) = \lambda_0 \cm_0(\cdot) + \lambda_1 \cm_1(\cdot) +  \lambda_2 \cm_2(\cdot) + \lambda_3 \cm_3(\cdot)$,
where $\sum_{i=0}^3 \lambda_i = 1$, $\lambda_i \geq 0$ for all values of $i$ and $\lambda_i > 0$ for some $i \neq 3$.
Then there is a M\"obius transformation $T$ such that $g(T(P))=o$. Furthermore, if $P$ is simplicial, the same statement holds for the convex combination
$g(\cdot) = \lambda_0 \cm_0(\cdot) + \lambda_1 \cm_1(\cdot) +  \lambda_2 \cm_2(\cdot) + \lambda_3 \cm_3(\cdot) + \lambda_4 \ccm(\cdot)$ under the same conditions.
\end{rem}

\begin{rem}
More elaborate computations, similar to those in Subsection~\ref{subsec:euler}, show that for any sufficiently large value of $t$,
the integral curves of the vector field $h_{cm}$ defined in (\ref{eq:cm}) cross $D_t$ in both directions.
This shows why our argument fails for $\cm_3(\cdot)$.
\end{rem}

\begin{rem}
An alternative way to prove Theorem~\ref{thm:main} for $\ccm(\cdot)$ seems to be the following. First, we triangulate the boundary of $P$ using the symmetric right trapezoids $Q_{i,j}$, or more specifically, we subdivide the faces by the incenters of the faces and the tangency points on the edges.
Computing the circumcenter of mass of this triangulation leads to a significantly simpler function for $\ccm(P)$ than the one in (\ref{eq:ccm_function}).
Nevertheless, as it is remarked in Subsection~\ref{subsec:centers}, circumcenter of mass is invariant only under triangulations that do not add new vertices to $\bd P$
(cf. also Remark 3.1 in \cite{Tabachnikov}).
\end{rem}

\begin{prob}\label{prob:balancing}
Prove or disprove that every combinatorial class of convex polyhedra contains a Koebe polyhedron whose center of mass is the origin.
\end{prob}

\begin{prob}
Prove or disprove that the M\"obius transformations in Theorem~\ref{thm:main} are unique up to Euclidean isometries.
\end{prob}

\begin{prob}
Is it possible to prove variants of Theorem~\ref{thm:circles} if the weight functions $w_i$ in (\ref{eq:vertices}) depend not only on $\rho_T(C_i)$ but also on the radii of the other spherical caps as well?
\end{prob}

\begin{prob}
Schramm \cite{Schramm} proved that if $K$ is any smooth, strictly convex body in $\Re^3$, then every combinatorial class of convex polyhedra contains a representative midscribed about $K$. If $K$ is symmetric to the origin, does this statement remain true with the additional assumption that the barycenter of the tangency points of this representative is the origin? Can the barycenter of the tangency points be replaced by other centers of the polyhedron?
\end{prob}

\section{Acknowledgment}
The author thanks G. Domokos, P. B\'alint, G. Etesi and Sz. Szab\'o for many fruitful discussions on this problem.

\end{document}